\newcommand{\id}{{\rm id}}
\newtheorem{lemma1}{}[section]
\newenvironment{lemma}{\begin{lemma1}{\bf Lemma.}}{\end{lemma1}}
\newenvironment{theorem}{\begin{lemma1}{\bf Theorem.}}{\end{lemma1}}
\newenvironment{proposition}{\begin{lemma1}{\bf Proposition.}}{\end{lemma1}}
\newenvironment{corollary}{\begin{lemma1}{\bf Corollary.}}{\end{lemma1}}
\newenvironment{remark}{\begin{lemma1}{\bf Remark.}\rm}{\end{lemma1}}
\newenvironment{definition}{\begin{lemma1}{\bf Definition.}}{\end{lemma1}}
\newenvironment{conjecture}{\begin {lemma1}{\bf Conjecture.}}{\end{lemma1}}
\newenvironment{remark*}{{\bf Remark.}}{}
\newenvironment{example*}{{\bf Example.}}{}
\newcommand{\Q}{\ensuremath{\mathbb{Q}}}
\newcommand{\C}{\ensuremath{\mathbb{C}}}
\newcommand{\N}{\ensuremath{\mathbb{N}}}
\newcommand{\holom}[3]{\ensuremath{#1\colon #2  \rightarrow #3}}
\newcommand{\fibre}[2]{\ensuremath{#1^{-1} (#2)}}
\DeclareMathOperator*{\reg}{reg}
\DeclareMathOperator*{\Nlc}{Nlc}
\DeclareMathOperator*{\mult}{mult}
\DeclareMathOperator*{\supp}{supp}
\DeclareMathOperator*{\gen}{gen}
\DeclareMathOperator*{\Vol}{Vol}
\DeclareMathOperator*{\Exc}{Exc}
\title{Singularities of varieties admitting an endomorphism} 
\date{\today}
\author{Ama\"el Broustet}
\author{Andreas H\"oring}
\subjclass[2000]{14B05, 14E30, 14J40}
\keywords{endomorphism, non-lc locus, log-canonical model}
\address{Ama\"el Broustet, Universit{\'e} Lille 1, UMR CNRS 8524,
UFR de math{\'e}matiques, 59 655 Villeneuve d'Ascq CEDEX.}
\email{amael.broustet@math.univ-lille1.fr}
\address{Andreas H\"oring, Universit{\'e} Pierre et Marie Curie, Institut de math{\'e}matiques de Jussieu,
Projet Topologie et g{\'e}om{\'e}trie alg{\'e}briques, Case 247,  4 place Jussieu, 75005 Paris, France}
\email{hoering@math.jussieu.fr}
\begin{document}

\begin{abstract}
Let $X$ be a normal variety such that $K_X$ is $\Q$-Cartier, and let \holom{f}{X}{X} be a finite surjective morphism 
of degree at least two.
We establish a close relation between the irreducible components of the locus of singularities that are not
log-canonical and the dynamics of the endomorphism $f$.
As a consequence we prove that if $X$ is projective and $f$ polarised, then $X$ has at most log-canonical
singularities.
\end{abstract}

\maketitle

\section{Introduction}
\subsection{Main result}

Let $X$ be a normal variety and let \holom{f}{X}{X} be an endomorphism, 
i.e. a finite surjective morphism
of degree $\deg(f)>1$. If $X$ is projective, an abundant literature 
\cite{Bea01, Fuj02, Ame03,  FN07, Nak08, AKP08, NZ10, Zha10} 
shows that the existence of an endomorphism imposes strong restrictions on the global geometry of $X$. 
In this paper we address the question
if the existence of an endomorphism also imposes restrictions on the local geometry, i.e. restrictions
on the nature of the singularities. In a recent paper Boucksom, de Fernex and Favre introduce
the volume $\Vol(X, x)$ of an isolated singularity. Using this invariant they give 
a precise answer to our question for isolated singularities. 

\begin{theorem} \label{theorembdf} \cite[Thm.B]{BDF12}
Let $X$ be a normal variety with isolated singularities, and let $\holom{f}{(X, x)}{(X, x)}$ be an endomorphism of degree $\deg(f)>1$. Then we have $\Vol(X, x) = 0$.

If $K_X$ is $\Q$-Cartier then X has log-canonical singularities, and it furthermore has klt singularities if $f$ is not \'etale in codimension one.
\end{theorem}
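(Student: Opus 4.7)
The invariant at the heart of the theorem is the local volume $\Vol(X,x)$ of an isolated $\Q$-Gorenstein normal singularity, a nonnegative real number defined asymptotically via a graded sequence of (fractional) ideals $\mathfrak a_m \subset \sO_X$ built from the relative canonical divisor $K_{Y/X}$ on a log resolution $\holom{\pi}{Y}{X}$. The plan is to first show $\Vol(X,x)=0$ by exploiting the endomorphism, then deduce log-canonicity, and finally upgrade to klt when ramification is present.

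The first step should be a transformation rule: for a finite surjective endomorphism $\holom{f}{(X,x)}{(X,x)}$ of degree $d$, one ought to have $\Vol(X,x) = d\cdot \Vol(X,x)$. I would begin with a log resolution $\holom{\pi}{Y}{X}$, base-change it along $f$, and pass to a common log resolution to produce a lifted finite map $\holom{\tilde f}{Y'}{Y}$ of degree $d$. The graded family of ideals defining $\Vol$ is compatible (up to bounded error) with pullback by $\tilde f$, and the multiplicativity of colengths under finite flat pullback then yields the scaling by $d$ in the limit $m \to \infty$. Since $d > 1$, the equality forces $\Vol(X,x) = 0$.

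The second step invokes the characterization theorem of Boucksom--de Fernex--Favre: for a $\Q$-Gorenstein isolated singularity, $\Vol(X,x) = 0$ if and only if $(X,x)$ is log-canonical. Concretely, writing $K_Y - \pi^* K_X = \sum a_i E_i$ on a log resolution, the asymptotic colengths in the definition of $\Vol$ measure how far the discrepancies $a_i$ fall below $-1$, and hence vanishing of the volume is equivalent to $a_i \geq -1$ for every $i$. Combined with Step 1, this gives that $X$ is log-canonical at $x$.

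For the klt strengthening, assume that $f$ is not \'etale in codimension one, so the ramification divisor $R_f$ in the formula $K_X = f^* K_X + R_f$ is nonzero and effective. Lifting $f$ to the common log resolution and iterating, an exceptional divisor $E_i$ with discrepancy exactly $-1$ would have to be tracked through the dynamics of $\tilde f$; but the ramification contributions along the orbit $\tilde f^{-n}(E_i)$ accumulate positively and force a strict inequality $a_i > -1$, unless $E_i$ permanently avoids $R_f$ under every iterate. Since the exceptional locus of $\pi$ contracts to the single point $x$ and the dynamics are proper, such permanent avoidance is excluded, and we conclude that $X$ is klt at $x$. The principal obstacle is Step~1: establishing the exact multiplicativity $\Vol(X,x) = d \cdot \Vol(X,x)$ requires careful control over how the graded ideal sequence behaves under pullback by $f$, particularly through the integer-part roundings implicit in the definition of the volume; once this scaling is secured, the rest follows from the volume-characterization of log-canonicity and a comparatively routine ramification argument.
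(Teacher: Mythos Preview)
This theorem is not proven in the paper at all: it is quoted verbatim from \cite[Thm.~B]{BDF12}, and the paper uses it only as motivation and context. There is therefore no ``paper's own proof'' to compare against in the strict sense. Your sketch is, in outline, the original Boucksom--de~Fernex--Favre argument (a transformation rule for the local volume under finite maps, forcing $\Vol(X,x)=d\cdot\Vol(X,x)$ and hence $\Vol(X,x)=0$, combined with the characterisation of log-canonicity by vanishing volume). One inaccuracy: in the opening line you describe $\Vol(X,x)$ as an invariant of a $\Q$-Gorenstein singularity, but the first assertion of the theorem is stated and proved in \cite{BDF12} without the $\Q$-Cartier hypothesis, using the nef-envelope definition of the log-discrepancy $b$-divisor; only the lc/klt conclusions use $\Q$-Cartier.

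That said, the present paper \emph{does} recover the $\Q$-Cartier part of the statement, by a completely different route: Theorem~\ref{theoremmainlocal} shows that every irreducible component $Z$ of $\Nlc(X)$ is (after iteration) totally invariant with $\deg(f|_Z)=\deg(f)>1$, so $Z$ cannot be a point, and hence $\Nlc(X)=\emptyset$ when the singularities are isolated. This argument never touches the volume; it proceeds via the log-canonical model $\mu\colon Y\to X$, lifts the endomorphism to $Y$, and compares discrepancies on the exceptional divisors over $Z$. What this buys is that it works for non-isolated singularities, yielding a structural statement about $\Nlc(X)$ rather than just its emptiness. What your (i.e.\ the BDF) approach buys is the more refined volumetric conclusion $\Vol(X,x)=0$, valid without the $\Q$-Cartier hypothesis, and the klt upgrade under ramification---neither of which is addressed by the paper's methods.
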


Fulger \cite{Ful11} introduces a different invariant $\Vol_F(X, x)$ associated to an isolated singularity and proves the analogue of Theorem \ref{theorembdf} for $\Vol_F(X, x)$. Let us note that
$$
\Vol(X, x) \geq \mbox{Vol}_F(X, x)
$$
and equality holds if $K_X$ is $\Q$-Cartier.

In this paper we will consider varieties such that $K_X$ is $\Q$-Cartier, but the singularities are not isolated.
In this case $X$ is not necessarily log-canonical: if $Y$ is any normal variety such that $K_Y$ is $\Q$-Cartier and $E$ an elliptic curve, 
then
$X:= Y \times E$ admits the endomorphism $f := \id_Y \times g$ with $g$ the multiplication by $m \in \N$.
However we can establish a close relation between 
the irreducible components of the non-lc locus and the dynamics of the endomorphism:

\begin{theorem} \label{theoremmainlocal}
Let $X$ be a normal variety such that $K_X$ is $\Q$-Cartier, and let $\holom{f}{X}{X}$ be an endomorphism of degree $\deg(f)>1$.

Let $Z$ be an irreducible component of $\Nlc(X)$. Then (up to replacing $f$ by some iterate) $Z$ is totally invariant.
In this case $Z$ is not contained in the ramification divisor $R$,
and the induced endomorphism \holom{f|_Z}{Z}{Z} satisfies 
$$
\deg(f|_Z)=\deg(f).
$$
\end{theorem}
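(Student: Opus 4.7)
My plan is to prove the theorem in three stages: I would relate $\Nlc(X,0)$ to $\Nlc(X,-R)$ via $f$ using Koll\'ar--Mori for pairs under finite covers, obtain total invariance of each irreducible component of $\Nlc(X)$ after replacing $f$ by an iterate, and derive $Z\not\subseteq R$ and $\deg(f|_Z)=\deg(f)$ via the projection formula combined with an iteration argument on the ramification divisor.

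For the first stage, Riemann--Hurwitz gives $K_X\sim_\Q f^*K_X+R$ with $R\geq 0$, so $(X,-R)$ is the pullback pair of $(X,0)$ along $f$. Koll\'ar--Mori then yields the set-theoretic identity $\Nlc(X,0)=f(\Nlc(X,-R))$ together with $\Nlc(X,-R)\subseteq\Nlc(X,0)$, since $-R\leq 0$ only increases discrepancies. The same holds for every iterate $f^n$ with ramification divisor $R_n=K_X-(f^n)^*K_X$, and the decreasing sequence $\{\Nlc(X,-R_n)\}$ stabilises to some closed subset $N_\infty\subseteq\Nlc(X,0)$ that is $f^{-1}$-invariant.

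For the second stage, I decompose $N:=\Nlc(X,0)$ into irreducible components and argue by decreasing dimension. For top-dimensional components, dimension considerations combined with the surjection $f\colon\Nlc(X,-R)\twoheadrightarrow N$ force every top-dimensional component of $N$ to also be a top-dimensional component of $\Nlc(X,-R)$; $f$ then permutes these, and an iterate fixes each, yielding $f(Z)=Z$. Total invariance $f^{-1}(Z)=Z$ is then obtained by showing that any hypothetical extra preimage component $Z^*\subseteq f^{-1}(Z)\setminus Z$ would, after further iteration, lie in $N_\infty$ as a top-dimensional component of $N$ distinct from $Z$ with $f(Z^*)=Z$, contradicting the identity permutation on top-dimensional components. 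The argument extends to lower-dimensional components by induction: once the higher-dimensional components are totally invariant, any hypothetical $d'$-dimensional component of $f^{-1}(Z'_i)$ sitting properly inside a higher-dimensional invariant $Z_j$ would map into $Z_j\cap Z'_i$, which has dimension strictly less than $d'$, a contradiction.

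For the third stage, total invariance $f^{-1}(Z)=Z$ makes $f^{-1}(Z)$ pure of dimension $\dim Z$, so $f^*[Z]=m[Z]$ in the Chow group for some positive integer $m$, and the projection formula with $f_*[Z]=\deg(f|_Z)[Z]$ gives $m\cdot\deg(f|_Z)=\deg(f)$. A local computation at a general point of $Z$ identifies $m=1$ with $f$ being \'etale at a general point of $Z$, i.e.\ with $Z\not\subseteq R$. To rule out $m>1$, I would assume $Z\subseteq D$ for some prime component $D$ of $R$: iterating $f$, the coefficient of $D$ in $R_n$ admits a geometric lower bound that diverges with $n$, so on a log resolution the discrepancy $a_E(X,-R_n)$ of every divisor $E$ over the generic point $\eta$ of $Z$ tends to $+\infty$, making $(X,-R_n)$ lc at $\eta$ for large $n$. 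But $\eta\in Z\subseteq\Nlc(X,0)=f^n(\Nlc(X,-R_n))$, and total invariance of $Z$ under $f^n$ forces the unique scheme-theoretic $f^n$-preimage of $\eta$ in $Z$ to be $\eta$ itself, so $\eta\in\Nlc(X,-R_n)$, a contradiction. The hard part will be making the discrepancy estimate $a_E(X,-R_n)\to+\infty$ uniform over all divisors $E$ centred at $\eta$, which in higher codimension requires careful tracking of the exceptional contributions to $\pi^*R_n$ on a log resolution dominating both $Z$ and $D$.
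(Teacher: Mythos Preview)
Your Stages 1 and 2 are essentially the paper's Lemma 2.10: the relation $K_X+(-R)=f^*K_X$ together with \cite[Prop.~5.20]{KM98} gives $f^{-1}(\Nlc(X,0))=\Nlc(X,-R)\subseteq\Nlc(X,0)$, and the descending induction on dimension of components is the same argument the paper runs. So far so good.

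Stage 3 has a genuine gap, and it is not merely the uniformity issue you flag at the end --- the claim itself is false. You want to show that $(X,-R_n)$ becomes lc at $\eta$ for $n\gg 0$, but the very formula you invoke from \cite{KM98} shows this can never happen: since $K_X-R_n=(f^n)^*K_X$, one has $a(E',X,-R_n)+1=r\cdot(a(E,X,0)+1)$ for $E'$ lying over $E$, so $(X,-R_n)$ is lc at a point $p$ if and only if $(X,0)$ is lc at $f^n(p)$. As $f^n(\eta)=\eta\in\Nlc(X,0)$, the pair $(X,-R_n)$ is \emph{never} lc at $\eta$. What goes wrong in your heuristic is that although $a(E,X,-R_n)\to+\infty$ for each \emph{fixed} $E$ centred at $Z$, the infimum over all such $E$ stays below $-1$: non-lc-ness of $(X,0)$ at $\eta$ means $\inf_E a(E,X,0)=-\infty$, and the growth of $\mult_E(\pi^*R_n)$ cannot outrun this. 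There is also a secondary issue: you identify $m=1$ with $Z\not\subset R$, but $R$ is a divisor and $Z$ may have higher codimension; on a singular $X$, purity can fail, so ``$\eta\notin\supp R$'' and ``$f$ unramified at $\eta$'' are genuinely different conditions. The paper in fact proves $Z\not\subset R$ and $\deg(f|_Z)=\deg(f)$ by separate arguments.

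The missing ingredient is the log-canonical model $\mu\colon(Y,\Delta_Y)\to X$. This replaces the infinitely many divisors over $\eta$ with unbounded negative discrepancy by the \emph{finitely many} $\mu$-exceptional primes $E_1,\dots,E_k$ over $Z$, whose discrepancies $a_i$ are fixed. Your growing-multiplicity estimate then becomes useful: the paper uses exactly the bound $\mult_{E_i}(\mu^*R_n)\geq n/m$ (Step 1 of Proposition 3.1) to arrange $\mult_{E_i}(\mu^*R)+a_i\geq 0$ after one iteration, and then compares this on a resolution of the induced rational self-map of $Y$ to reach a contradiction with the antieffectivity of $\Delta_Y^{>1}$. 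For $\deg(f|_Z)=\deg(f)$ the paper again passes to the log-canonical model, this time of the germ at $\eta$ (where $R$ vanishes by the first part), lifts $f$ to a genuine finite endomorphism $g$ of $Y$, and uses $\mu$-ampleness of $K_Y+\Delta_Y$ on the fibres over $\eta$ to force $g$ to be unramified along the exceptional $E_1$. So the log-canonical model is the key device that makes the finiteness work; without it, your Stage 3 strategy cannot close.
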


Since we suppose $\deg(f)>1$ the last part of this statement shows that $Z$ cannot be a point, so we recover the $\Q$-Cartier case
of Theorem \ref{theorembdf}. 
If $X$ is projective we can consider the particularly interesting class of polarised endomorphisms, i.e. those endomorphisms such that there exists an ample divisor $H$ satisfying $f^* H \simeq m H$. In this case the statement 
becomes much stronger:

\begin{corollary} \label{corollarypolarised}
Let $X$ be a normal projective variety such that $K_X$ is $\Q$-Cartier, 
and let $\holom{f}{X}{X}$ be a polarised endomorphism of degree $\deg(f)>1$.

Then $X$ has at most log-canonical singularities. Moreover $X$ is klt near the ramification divisor $R$.
\end{corollary}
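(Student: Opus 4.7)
The plan is to combine Theorem~\ref{theoremmainlocal} with the numerical identities forced by the polarisation to obtain a dimension contradiction on any would-be component of $\Nlc(X)$. Write $n=\dim X$. Since $H$ is ample and $f^{*}H\sim mH$, taking top self-intersections yields $\deg(f)\cdot H^{n}=(f^{*}H)^{n}=m^{n}H^{n}$, so $\deg(f)=m^{n}$ and hence $m\ge 2$. The polarisation property is preserved by iteration ($f^{k}$ satisfies $(f^{k})^{*}H\sim m^{k}H$), so Theorem~\ref{theoremmainlocal} can be invoked freely after replacing $f$ by an iterate. The key numerical input is: for any positive-dimensional totally invariant closed subvariety $Z\subseteq X$ with $f|_{Z}$ finite and surjective, restricting the polarisation gives $(f|_{Z})^{*}(H|_{Z})\sim m\,(H|_{Z})$, and ampleness of $H|_{Z}$ forces
\begin{equation*}
\deg(f|_{Z})=m^{\dim Z}.
\end{equation*}

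For the log-canonicity statement, suppose $Z$ is an irreducible component of $\Nlc(X)$. After replacing $f$ by a suitable iterate, Theorem~\ref{theoremmainlocal} gives that $Z$ is totally invariant, not contained in $R$, and $\deg(f|_{Z})=\deg(f)=m^{n}$. Comparing with $\deg(f|_{Z})=m^{\dim Z}$ and using $m\ge 2$, one concludes $\dim Z=n$. This is absurd because $\Nlc(X)$ is contained in the singular locus of $X$, a proper closed subset of the irreducible variety $X$. Hence $\Nlc(X)=\varnothing$ and $X$ is log-canonical.

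For the klt-near-$R$ statement, let $W$ be an irreducible component of $\Nklt(X)$. I would first try to extend the conclusion of Theorem~\ref{theoremmainlocal} to arbitrary lc centres of $X$: if some iterate of $f$ leaves $W$ totally invariant with $\deg(f|_{W})=\deg(f)$ whenever $W\not\subseteq R$, then the polarisation computation above again forces $\dim W=n$, excluding any lc centre outside $R$. To rule out lc centres $W\subseteq R$, I would use the ramification formula $K_{X}=f^{*}K_{X}+R$: the positive contribution of $R$ strictly raises the log discrepancy of any lc place centred in $R$ after pullback by $f$, which is incompatible with the $f$-invariance of such a place in a totally invariant setting.

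The principal obstacle lies in this last step: Theorem~\ref{theoremmainlocal} as stated addresses only components of $\Nlc(X)$, so one must either revisit its proof to cover arbitrary lc centres (for $W\not\subseteq R$) or supply a separate discrepancy-theoretic argument based on the ramification formula (for $W\subseteq R$). Once the invariance and degree identities are available for lc centres, the rest of the corollary reduces to the formal polarisation/degree count of Steps~1--2.
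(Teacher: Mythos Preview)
Your argument for log-canonicity is correct and coincides with the paper's: one compares $\deg(f|_Z)=m^{\dim Z}$ (from the polarisation) with $\deg(f|_Z)=\deg(f)=m^{\dim X}$ (from Theorem~\ref{theoremmainlocal}) to force $\dim Z=\dim X$, a contradiction. The paper phrases this as the strict inequality $\deg(f|_Z)=m^{\dim Z}<m^{\dim X}=\deg(f)$ for any totally invariant proper subvariety, but it is the same computation.

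For the klt-near-$R$ part, however, you are taking an unnecessary detour and have misidentified the obstacle. You propose to extend the \emph{degree identity} $\deg(f|_W)=\deg(f)$ to lc centres $W$, and then run the same dimension count. The paper does not do this, and in fact there is no reason such a degree identity should hold for lc centres. What is actually needed is much weaker: one only has to show that no lc centre is contained in $R$. This is precisely the second statement of Lemma~\ref{lemmainvariance}: once $X$ is known to be log-canonical, the finitely many lc centres are permuted by $f^{-1}$, hence (after iteration) each is totally invariant, and then the discrepancy implication~\eqref{ramify} in that lemma's proof---essentially the ramification-formula argument you sketch for the case $W\subseteq R$---forces $W\not\subset R$. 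So the ``separate discrepancy-theoretic argument'' you mention is the whole story for the second claim, and it is already packaged in Lemma~\ref{lemmainvariance}; there is no need to revisit the proof of Theorem~\ref{theoremmainlocal} or to produce a degree identity for lc centres.
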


\subsection{Technique and generalisations}
The proof of our main result comes in two steps. In the first step we use a classical computation
describing the behaviour of log-discrepancies under finite morphisms \cite[Prop.5.20]{KM98} 
to prove that all the irreducible components of the non-lc locus are totally invariant. 
In the second step we use an idea introduced by Nakayama in his inspiring
preprint \cite{Nak08} on endomorphisms of normal surfaces: if $\mu: Y \rightarrow X$ is the log-canonical model
(cf. Definition \ref{definitionlcmodel}), the endomorphism $f$ lifts to a (rational) endomorphism 
$g$ of $Y$. We can then study the geometry of the ramification divisors along certain $\mu$-exceptional divisors
to deduce our result. 

Our proof actually works more generally for log pairs $(X, \Delta)$ such that $K_X+\Delta$ is $\Q$-Cartier and 
a logarithmic ramification formula holds. In this paper we focus on the geometrically most interesting
case where the boundary $\Delta$ is a totally invariant Weil divisor. 

\begin{theorem} \label{theoremmainlocalpair}
Let $X$ be a normal variety, and let $\holom{f}{X}{X}$ be an endomorphism of degree $\deg(f)>1$.
Let $\Delta$ be a reduced effective totally invariant Weil divisor such that $K_X+\Delta$ is $\Q$-Cartier.

Let $Z$ be an irreducible component of $\Nlc(X, \Delta)$. Then (up to replacing $f$ by some iterate) $Z$ is totally invariant.
In this case we have $Z \not\subset R_\Delta$ where $R_\Delta$ is the logarithmic ramification divisor,
and the induced endomorphism \holom{f|_Z}{Z}{Z} satisfies 
$$
\deg(f|_Z)=\deg(f).
$$
\end{theorem}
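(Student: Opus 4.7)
The plan is to follow the two-step strategy announced in the introduction, in its logarithmic incarnation: first use the behaviour of log discrepancies under finite morphisms to force total invariance, then pass to the log canonical modification and exploit its rigidity to control the ramification of the lifted endomorphism above $Z$.

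\textbf{Step 1 (total invariance).} The logarithmic ramification formula $f^{*}(K_X+\Delta)=K_X+\Delta-R_\Delta$ holds with $R_\Delta\geq 0$ because $\Delta$ is reduced and totally invariant. Applying the logarithmic version of \cite[Prop.~5.20]{KM98} to this identity yields
$$
f^{-1}\bigl(\Nlc(X,\Delta)\bigr)=\Nlc(X,\Delta-R_\Delta)\subseteq\Nlc(X,\Delta),
$$
the last inclusion because $R_\Delta\geq 0$. Since $f$ is finite, every irreducible component of $f^{-1}(Z)$ has the same dimension as $Z$; combined with the finiteness of the set of irreducible components of $\Nlc(X,\Delta)$, a pigeonhole argument on the resulting multi-valued backward self-correspondence (processed by decreasing dimension) produces an iterate of $f$ under which every component is fixed set-theoretically. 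Replacing $f$ by this iterate, $Z$ is totally invariant.

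\textbf{Step 2 (not contained in $R_\Delta$, and full degree).} Let $\mu\colon Y\to X$ be the logarithmic canonical modification of $(X,\Delta)$, so that the pair $(Y,\Gamma)$ with $\Gamma:=\mu^{-1}_{*}\Delta+\sum_j E_j$ (the $E_j$ being the $\mu$-exceptional divisors) is lc and $K_Y+\Gamma=\mu^{*}(K_X+\Delta)$, with $K_Y+\Gamma$ moreover $\mu$-ample. The endomorphism $f$ lifts to a rational map $g\colon Y\dashrightarrow Y$ with $\mu\circ g=f\circ\mu$; after a further iterate we may assume $g^{-1}(E)=E$ set-theoretically for every $\mu$-exceptional $E$ over $Z$. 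Working on a common resolution of the indeterminacy of $g$ gives
$$
g^{*}(K_Y+\Gamma)=K_Y+\Gamma-R_\Gamma,\qquad R_\Gamma\geq 0,
$$
and the commutative diagram together with $\mu^{*}(K_X+\Delta)=K_Y+\Gamma$ yields the key identity
$$
\mu^{*}R_\Delta=R_\Gamma.
$$
A direct calculation using only that $\Gamma$ is reduced and set-theoretically totally invariant under $g$ shows that the coefficient of $R_\Gamma$ along every irreducible component of $\supp\Gamma$ vanishes: the ramification contribution from $R_g$ cancels precisely against $g^{*}\Gamma-\Gamma$. In particular $\mult_E R_\Gamma=0$ for every $\mu$-exceptional $E$ over $Z$. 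Were $Z\subset R_\Delta$, then $\mu^{*}R_\Delta$ would have strictly positive coefficient along at least one such $E$, contradicting $\mu^{*}R_\Delta=R_\Gamma$; hence $Z\not\subset R_\Delta$. The degree identity then follows from $\mu\circ g=f\circ\mu$: one reads off $\deg(f|_Z)=\deg(g|_E)$ and $\deg(g|_E)\cdot e_E=\deg g=\deg f$, where $e_E$ is the ramification index of $g$ along $E$, and an argument combining the $\mu$-ampleness of $K_Y+\Gamma$ with the vanishing $\mult_E R_\Gamma=0$ and a projection-formula computation on $\mu$-contracted curves inside $E$ forces $e_E=1$.

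\textbf{Main obstacle.} The crux is Step 2. Because $g$ is only rational, the identity $\mu^{*}R_\Delta=R_\Gamma$ must be interpreted on a resolution of the indeterminacy of $g$, with careful bookkeeping of divisorial contributions over $Z$; the vanishing of $R_\Gamma$ along the boundary is a feature specific to reduced totally invariant boundaries and breaks for fractional coefficients, which is precisely why our logarithmic extension is restricted to reduced $\Delta$. Forcing $e_E=1$ is the second delicate point, as $\mult_E R_\Gamma=0$ is automatic along boundary components and does not by itself constrain the ramification index: here the log canonical modification (rather than an arbitrary resolution) is essential, since the $\mu$-ampleness of $K_Y+\Gamma$ is what pins down the ramification behaviour along the invariant exceptional divisors, in the spirit of Nakayama's surface argument.
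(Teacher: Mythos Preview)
Your Step 1 is fine and matches the paper. Step 2, however, contains a genuine error that invalidates the rest of the argument.

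\textbf{The formula $K_Y+\Gamma=\mu^{*}(K_X+\Delta)$ is false.} By the very definition of the non-lc locus, over $Z$ there exist $\mu$-exceptional divisors whose discrepancy with respect to $(X,\Delta)$ is strictly less than $-1$. Since $\Gamma$ assigns coefficient $1$ to every $\mu$-exceptional divisor, the correct relation is
\[
K_Y+\Gamma=\mu^{*}(K_X+\Delta)+\Delta_Y^{>1},
\]
where $\Delta_Y^{>1}$ is a nonzero antieffective $\Q$-Cartier divisor with $\supp\Delta_Y^{>1}=\Exc(\mu)$ (cf.\ Remark~\ref{remarklcmodel}). Your ``key identity'' $\mu^{*}R_\Delta=R_\Gamma$ therefore fails; the correction $\Delta_Y^{>1}$ is not a nuisance term but the whole mechanism of the proof. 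In the paper the comparison of the two log-canonical formulas on a resolution $V$ of the indeterminacies of $g$ yields (equation~\eqref{one})
\[
\nu^{*}(\mu^{*}R_\Delta+\Delta_Y^{>1})+N=g^{*}\Delta_Y^{>1}+R_g,
\]
and the contradiction to $Z\subset R_\Delta$ comes from a sign conflict: along a divisor $D\subset V$ with $g(D)=E_1$ the right-hand side is strictly negative (because $\Delta_Y^{>1}$ is antieffective and $D\not\subset R_g$), while the left-hand side is nonnegative once one has shown, by iterating $f$, that $\mu^{*}R_\Delta+\sum a_iE_i\geq 0$ near $Z$. Your argument never produces this negative term, because you have set $\Delta_Y^{>1}=0$.

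\textbf{The degree argument also needs repair.} In the paper the two assertions are proved sequentially, not simultaneously: one first establishes $Z\not\subset R_\Delta$, then localises at the generic point $Z_{\gen}$. There the induced endomorphism is \emph{\'etale in codimension one}, so by Lemma~\ref{lemmalifting} it lifts to a genuinely \emph{finite} (not merely rational) morphism $g$ of the log-canonical model satisfying $K_{\tilde Y}+\Delta_{\tilde Y}=g^{*}(K_{\tilde Y}+\Delta_{\tilde Y})$ exactly. This log-crepancy, combined with \eqref{lcmodel}, gives $\Delta_{\tilde Y}^{>1}=g^{*}\Delta_{\tilde Y}^{>1}$, which forces $g^{*}E_1=E_1$ and hence $e_{E_1}=1$; the $\mu$-ampleness is used only to show $\deg(g|_{E_1})=\deg(f|_Z)$ via an intersection computation on fibres. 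Your sketch tries to extract $e_E=1$ directly from $\mult_E R_\Gamma=0$, but as you yourself note, that vanishing is automatic along boundary components and carries no information about ramification; without the localisation step and the resulting exact equality $g^{*}\Delta_Y^{>1}=\Delta_Y^{>1}$, there is no leverage.
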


Theorem \ref{theoremmainlocalpair} is simply the case $\Delta= \emptyset$ in the preceding statement.

Let us note that the existence of log-canonical models has been proven recently
by Odaka and Xu \cite{OX12} for pairs $(X, \Delta)$ such that $K_X+\Delta$ is $\Q$-Cartier.
If log-canonical models exist in general\footnote{The existence of log-canonical models would be a consequence of the MMP, including the
abundance conjecture.}, it seems plausible that our results can be generalised to arbitrary normal varieties.

\begin{conjecture} \label{conjecturegeneral}
Let $X$ be a normal variety, and let $\holom{f}{X}{X}$ be an endomorphism of degree $\deg(f)>1$.
Suppose that $X$ admits a log-canonical model $\holom{\mu}{Y}{X}$.
Let $Z$ be an irreducible component of $\mu(E^{lc}_\mu)$, where 
$E^{lc}_\mu$ is the sum of all the $\mu$-exceptional prime divisors taken with coefficient one.

Then (up to replacing $f$ by some iterate) $Z$ is totally invariant. In this case $Z$ is not contained in the ramification divisor $R$,
and the induced endomorphism \holom{f|_Z}{Z}{Z} satisfies 
$$
\deg(f|_Z)=\deg(f).
$$
If moreover $X$ is projective and $f$ is polarised, then $\mu$ is an isomorphism in codimension one.
\end{conjecture}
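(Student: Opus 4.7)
The plan is to mirror the two-step strategy of Theorem \ref{theoremmainlocalpair} on the log-canonical modification, treating $(Y, E^{lc}_\mu)$ as the pair playing the role of $(X, \Delta)$. The crucial preliminary is to lift $f$ to a finite endomorphism $g: Y \to Y$ with $\mu \circ g = f \circ \mu$. On the open locus $U := Y \setminus \Exc(\mu)$ we set $g|_U := \mu^{-1} \circ f \circ \mu$, a rational map; to extend this to all of $Y$, I would normalise the fibre product $Y \times_{\mu, X, f} X$ to obtain a variety $\tilde Y$ equipped with a finite morphism $\pi_1: \tilde Y \to Y$ of degree $\deg(f)$ and a projective birational morphism $\pi_2: \tilde Y \to X$. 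Since $f$ is finite and $(Y, E^{lc}_\mu)$ is log-canonical, the identification of $\pi_2$-exceptional primes as $\pi_1$-preimages of $\mu$-exceptional primes, combined with the logarithmic Riemann--Hurwitz formula for $\pi_1$, yields that $(\tilde Y, E^{lc}_{\pi_2})$ is again a log-canonical modification of $X$. By uniqueness of the log-canonical modification, $\tilde Y \cong Y$ over $X$, producing the desired $g$ of degree $\deg(f)$.

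With $g$ in hand, we apply the method of Theorem \ref{theoremmainlocalpair} to the pair $(Y, E^{lc}_\mu)$. Because $g$ is finite and $\mu g = f \mu$, a direct dimension count on $f$-images and $f$-preimages shows that every $\mu$-exceptional prime divisor is mapped by $g$, and by each component of $g^{-1}$, to a $\mu$-exceptional prime divisor; hence $g$ induces a self-map on the finite set $\{E_i\}$ of $\mu$-exceptional primes, and after replacing $f$ by a sufficiently high iterate each $E_i$ in the eventual image of this self-map satisfies $g(E_i) = E_i$. The logarithmic ramification formula
$$K_Y + E^{lc}_\mu = g^*(K_Y + E^{lc}_\mu) + R^{\log}_g$$
holds as $K_Y + E^{lc}_\mu$ is $\mathbb{Q}$-Cartier, and, arguing exactly as in the proof of Theorem \ref{theoremmainlocalpair}, the log-canonicity of $(Y, E^{lc}_\mu)$ combined with coefficient comparison along each fixed $E_i$ forces $R^{\log}_g$ to vanish there, so $g$ is log-\'etale along $E_i$. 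Pushing down via $\mu$ yields, for each irreducible component $Z$ of $\mu(E^{lc}_\mu)$, the three claims: total invariance under $f$, $Z \not\subset R$, and $\deg(f|_Z) = \deg(f)$.

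For the polarised case, suppose $f^* H \equiv mH$ with $H$ ample on $X$ and $m \geq 2$, and suppose for contradiction that $\mu$ contracts some prime divisor; then $\mu(E^{lc}_\mu) \neq \emptyset$, and an irreducible component $Z$ of it has codimension at least two in $X$. By the preceding paragraph applied to a suitable iterate of $f$, $Z$ is $f$-totally invariant and $\deg(f|_Z) = \deg(f) = m^{\dim X}$. But $H|_Z$ is ample on $Z$ with $(f|_Z)^*(H|_Z) \equiv m(H|_Z)$, so $f|_Z$ is a polarised endomorphism of degree $m^{\dim Z} \leq m^{\dim X - 2}$, contradicting $\deg(f|_Z) = m^{\dim X}$. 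Hence $\mu$ is small, as claimed. The principal obstacle is Step~1, the construction of $g$: identifying the normalised fibre product with the log-canonical modification of $X$ (so that uniqueness applies) requires a careful local analysis of how log-canonicity behaves under the finite pullback $\pi_1$, especially without any $\mathbb{Q}$-Cartier hypothesis on $K_X$. A secondary subtlety in Step~2 is the passage from $g$-total-invariance of individual $E_i$ to $f$-total-invariance of components $Z$ when several $E_i$'s are contracted to the same $Z$, which requires careful orbit combinatorics of the $E_i$'s above $Z$.
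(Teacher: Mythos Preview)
The statement you are attempting to prove is labelled \emph{Conjecture} in the paper; the authors do not prove it and explicitly present it as an open problem whose resolution would likely require the full MMP including abundance. There is therefore no ``paper's own proof'' to compare against.

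That said, it is worth contrasting your strategy with what the paper actually does in the $\Q$-Cartier case (Theorem~\ref{theoremmainlocalpair}), since this exposes why your Step~1 is not merely a technical nuisance but the heart of the matter. You propose to lift $f$ to a \emph{finite} endomorphism $g\colon Y\to Y$ at the outset by normalising the fibre product and invoking uniqueness of the log-canonical model. The paper's Lemma~\ref{lemmalifting} performs exactly this construction, but under the additional hypothesis $K_{X_1}+\Delta_1 = f^*(K_{X_2}+\Delta_2)$, i.e.\ that $f$ is log-\'etale in codimension one. That hypothesis is used in an essential way: without it the logarithmic ramification divisor $R_\Delta$ of $p_2$ need not vanish, so one cannot conclude that $K_{\tilde Y}+E^{lc}_{\pi_2}$ equals $\pi_1^*(K_Y+E^{lc}_\mu)$, and hence neither log-canonicity nor $\pi_2$-ampleness of the candidate pair follows. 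In the conjectural setting $K_X$ is not even assumed $\Q$-Cartier, so one cannot write a ramification formula on $X$ to control this discrepancy.

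In the proven $\Q$-Cartier case the paper circumvents this by proceeding in a different order. Proposition~\ref{propositionbranch} (the statement $Z\not\subset R_\Delta$) is established using only a \emph{rational} lift of $f$: one resolves the indeterminacies of $Y\dashrightarrow Y$ to a generically finite $g\colon V\to Y$ and compares multiplicities via the formula of Lemma~\ref{lemmagenericallyfinite}. Only afterwards, having shown $Z\not\subset R_\Delta$, does the paper localise at the generic point of $Z$; there $f$ becomes \'etale in codimension one, and \emph{then} Lemma~\ref{lemmalifting} applies to produce the finite lift needed for the degree computation. Your plan reverses this order and so requires the finite lift globally, which is precisely the missing ingredient.

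Your polarised-case argument is correct given the first part, and is the same degree-counting argument as in Corollary~\ref{corollarypolarisedpair}. The ``secondary subtlety'' you flag (descending $g$-invariance of the $E_i$ to $f$-invariance of the components $Z$) is real but minor by comparison; it can be handled as in Lemma~\ref{lemmainvariance} by inducting on dimension.
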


This statement would also generalise Theorem \ref{theorembdf} since we can prove that an isolated singularity has volume zero if and only if the log-canonical model (if it exists) is an isomorphism in codimension one, cf. Proposition \ref{propositionvolume}.

{\bf Acknowledgements.} We thank S. Druel and Y. Gongyo for pointing out some crucial references.
The authors are partially supported by the ANR project CLASS\footnote{\tiny ANR-10-JCJC-0111}. A.B is partially supported by the ANR project MACK\footnote{\tiny ANR-10-BLAN-0104} and the Labex CEMPI\footnote{\tiny ANR-11-LABX-0007-01}.

\section{Notation and basic results}

We work over the complex field $\C$, topological notions always refer to the Zariski topology.
For general definitions we refer to Hartshorne's book \cite{Har77}.
We will use standard terminology and results 
of the minimal model program (MMP) as explained in \cite{KM98} or \cite{HK10}. 
A variety is an integral scheme of finite type over $\C$. For $D$ a $\Q$-Weil divisor on a normal variety $X$,
we denote by $\supp(D)$ its support.

\subsection{Singularities of pairs}

Let $X$ be a normal variety, and let \holom{\mu}{X'}{X} be a proper birational morphism from 
a normal variety $X'$. If $\Delta \subset X$ is a $\Q$-Weil divisor, we denote by 
$\mu_*^{-1}(\Delta)$ its strict transform. 

A log-pair is a tuple $(X, \Delta)$ where $X$ is a normal variety
and $\Delta=\sum_i d_i \Delta_i$ is a $\Q$-Weil divisor on $X$ with $d_i \leq 1$ for all $i$. 
We say that 
the pair $(X, \Delta)$ is lc (resp. klt)\footnote{Note that we do not assume that the boundary
divisor $\Delta$ is effective, so some authors would say that such a pair is sub-lc (resp. sub-klt).
We follow the notation of \cite{KM98}.} if 
$K_X+\Delta$ is $\Q$-Cartier and
for every proper birational morphism 
\holom{\mu}{X'}{X} from a normal variety $X'$
we can write  
$$
K_{X'}+\mu_*^{-1}(\Delta) = \mu^* (K_X+\Delta) + \sum_j a(E_j, X, \Delta) E_j,
$$
where the divisor $E_j$ are $\mu$-exceptional and $a(E_j, X, \Delta) \geq -1$ (resp. $a(E_j, X, \Delta)>-1$) for all $j$.
If the pair $(X, \Delta)$ is log-canonical, we say that a subvariety $Z \subset X$ is an lc centre if there exists
a morphism \holom{\mu}{X'}{X} as above and a $\mu$-exceptional divisor $E$ such that $E \twoheadrightarrow Z$ 
and $a(E, X, \Delta)=-1$.

\begin{definition} \label{definitionnlc}
Let $(X, \Delta)$ be a log-pair such that $K_X+\Delta$ is $\Q$-Cartier. 
The non-lc locus $\Nlc(X, \Delta)$ is the smallest closed set $W \subset X$
such that $(X \setminus W, \Delta|_{X \setminus W})$ is lc. 
\end{definition}

\begin{definition} \label{definitionlcmodel}
Let $(X, \Delta)$ be a log-pair such that $\Delta \geq 0$.
A log-canonical model of the pair $(X, \Delta)$
is a proper birational morphism
$$
\holom{\mu}{Y}{X}
$$
such that if we set
$$
\Delta_Y := \mu_*^{-1}(\Delta)+E^{lc}_\mu,
$$
where $E^{lc}_\mu$ is the sum of all the $\mu$-exceptional prime divisors taken with coefficient one, the pair $(Y, \Delta_Y)$ is log-canonical and $K_Y+\Delta_Y$ is $\mu$-ample.
\end{definition}

\begin{remark} \label{remarklcmodel}
\begin{enumerate}
\item[a)] If a pair $(X, \Delta)$ admits a log-canonical model, it is unique up to isomorphism \cite[Prop.2.3]{OX12}.
\item[b)] Suppose now that $\Delta \geq 0$ and $K_X+\Delta$ is $\Q$-Cartier. Then $(X, \Delta)$ admits a log-canonical model \cite[Thm.1.1]{OX12}.
Moreover the $\mu$-exceptional locus has pure codimension one \cite[Lemma 2.4]{OX12}.
If we write 
\begin{equation} \label{lcmodel}
K_Y+\Delta_Y = \mu^* (K_X+\Delta) + \Delta_Y^{>1},
\end{equation}
then $\Delta_Y^{>1}$ is antieffective and $\supp \Delta_Y^{>1}=\Exc(\mu)$ (ibid). 
By the definition of $\Delta_Y$ we have $\supp \Delta_Y^{>1} \subset \Delta_Y$.
Note also that since $K_X+\Delta$ and $K_Y+\Delta_Y$ are $\Q$-Cartier, the divisor $\Delta_Y^{>1}$ is $\Q$-Cartier.
\end{enumerate}
\end{remark}

The following proposition establishes the link between Conjecture \ref{conjecturegeneral} and Theorem~\ref{theorembdf}.

\begin{proposition} \label{propositionvolume}
Let $X$ be a normal variety with singular locus a point $x$. 
Assume that $X$ has a log-canonical model $\mu : (Y, \Delta_Y) \rightarrow X$.

Then $\Vol(X, x) = 0$ if and only if $\mu$ is an isomorphism in codimension $1$.
\end{proposition}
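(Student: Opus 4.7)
The plan is to show that both $\Vol(X,x)=0$ and ``$\mu$ is an isomorphism in codimension one'' are equivalent to the common third condition that $X$ is log-canonical at $x$; composing these two equivalences then yields the claim. Note that since $X$ admits a log-canonical model, $K_X$ is $\Q$-Cartier, so Remark~\ref{remarklcmodel}(b) is available.

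First I would handle the birational equivalence: $\mu$ is an isomorphism in codimension one if and only if $X$ is lc at $x$. By Remark~\ref{remarklcmodel}(b) the $\mu$-exceptional locus has pure codimension one, so $\mu$ being an isomorphism in codimension one forces $\Exc(\mu)=\emptyset$, and hence $\mu$ is an isomorphism outright (as $X$ is normal and $\mu$ is proper birational). In that case $\Delta_Y=E^{lc}_\mu=0$, and Definition~\ref{definitionlcmodel} gives that $(X,0)\cong(Y,\Delta_Y)$ is log-canonical; in particular $X$ is lc at $x$. Conversely, if $X$ is lc at $x$ then, since $x$ is the only singular point, $X$ itself is log-canonical everywhere, so $\id_X\colon X\to X$ trivially fulfills Definition~\ref{definitionlcmodel}; by the uniqueness statement in Remark~\ref{remarklcmodel}(a) the given $\mu$ is then an isomorphism, and in particular an isomorphism in codimension one.

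Second I would invoke Boucksom--de Fernex--Favre \cite{BDF12}, whose vanishing criterion for the singularity volume in the $\Q$-Cartier setting reads
\[
\Vol(X,x)=0 \;\Longleftrightarrow\; X \text{ is log-canonical at } x.
\]
The implication ``$\Rightarrow$'' is the $\Q$-Cartier part of Theorem~\ref{theorembdf} recalled in the introduction, while the converse (lc implies $\Vol=0$) is also contained in loc.\ cit. Composing with the first equivalence completes the proof.

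The only non-formal ingredient is the import of the vanishing criterion from \cite{BDF12}; all remaining steps are direct consequences of Definition~\ref{definitionlcmodel} and Remark~\ref{remarklcmodel}. The main conceptual point — and the reason the statement reduces so cleanly — is that for the log-canonical model, being an isomorphism in codimension one is actually equivalent to being an isomorphism, because exceptional divisors of $\mu$ can only occur in pure codimension one.
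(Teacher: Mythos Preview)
Your reduction rests on the claim that the existence of a log-canonical model forces $K_X$ to be $\Q$-Cartier, but this is false: Definition~\ref{definitionlcmodel} imposes no such condition, and Remark~\ref{remarklcmodel}(b) only records the converse implication. Indeed, the hypothesis ``assume that $X$ has a log-canonical model'' would be redundant were $K_X$ $\Q$-Cartier; the proposition is precisely the bridge between Conjecture~\ref{conjecturegeneral} (stated for arbitrary normal $X$) and Theorem~\ref{theorembdf}, and is meant to cover the non-$\Q$-Gorenstein case. Once $K_X$ is not $\Q$-Cartier, both halves of your argument collapse. The intermediate condition ``$X$ is log-canonical at $x$'' has no classical meaning, so there is nothing to pivot through; and Remark~\ref{remarklcmodel}(b) is unavailable, so ``$\mu$ is an isomorphism in codimension one'' need not imply that $\mu$ is an isomorphism (a small partial $\Q$-factorialisation of a non-$\Q$-Gorenstein isolated singularity can perfectly well satisfy Definition~\ref{definitionlcmodel}). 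A separate minor point: Theorem~\ref{theorembdf} concerns endomorphisms and does not itself assert ``$\Vol=0\Rightarrow$ lc''; the relevant input from \cite{BDF12} is the criterion via the log-discrepancy $b$-divisor, not that theorem.

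The paper handles the general case by working directly with the $b$-divisor $A_{\mathfrak X/X}=K_{\mathfrak X}+1_{\mathfrak X/X}+\mbox{Env}_X(-K_X)$ of \cite{BDF12} and the characterisation $\Vol(X,x)=0\Leftrightarrow A_{\mathfrak X/X}\geq 0$ \cite[Prop.~4.19]{BDF12}. For the harder direction one assumes $\Delta_Y\neq 0$, passes to a dlt model $(Z,\Delta_Z)$ of $(Y,\Delta_Y)$, observes that the trace of $A_{\mathfrak X/X}$ on $Z$ restricts to a big class on every component of the strict transform of $\Delta_Y$, and then uses a negativity-lemma argument to produce a negative coefficient, contradicting effectivity. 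For the easier direction, $Y$ lc together with $\mu$ small already forces $A_{\mathfrak X/X}\geq 0$. If one is willing to \emph{add} the hypothesis that $K_X$ is $\Q$-Cartier, your argument is correct and considerably shorter than the paper's; but that hypothesis is not part of the statement being proved.
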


For the proof of this statement we will use the tools and terminology of \cite{BDF12}: given a canonical divisor $K_X$ on $X$, there is a unique canonical divisor $K_{X_\pi}$, for each birational model $\pi \colon X_\pi \to X$, 
with the property that $\pi_*K_{X_\pi} = K_X$. 
Thus we obtain a canonical $b$-divisor $K_{\mathfrak X}$ over $X$. Boucksom, de Fernex and Favre
define the nef envelope $\mbox{Env}_X(-K_X)$ of the Weil divisor $-K_X$ as the largest
nef Weil $b$-divisor $Z$ that is both relatively nef over $X$ and satisfies $Z_X \leq -K_X$.
The log-discrepancy $b$-divisor $A_{{\mathfrak X}/X}$ is then defined by  
\begin{equation} \label{bdivisors}
A_{{\mathfrak X}/X} = K_{\mathfrak X} + 1_{{\mathfrak X}/X} + \mbox{Env}_X(-K_X),
\end{equation}
where the trace of $1_{\mathfrak X/X}$ in any model is equal to the reduced exceptional divisor over $X$.

\begin{proof}[Proof of Proposition \ref{propositionvolume}] 
{\em Suppose that $\Vol(X, x) = 0$.} We will argue by contradiction and suppose that the divisor $\Delta_Y$ is not zero.
Let \holom{\nu}{Z}{Y} be a dlt-model of the log-canonical pair $(Y, \Delta_Y)$ \cite[Thm.10.4]{Fuj11}, 
i.e. $\nu$ is a birational morphism
from a normal $\Q$-factorial variety $Z$ such that if we denote by $B$ the $\nu$-exceptional divisors taken with
coefficient one and set
$$
\Delta_Z := \nu_*^{-1}(\Delta_Y)+B,
$$
then the pair $(Z, \Delta_Z)$ is dlt and we have
$$
K_Z+\Delta_Z = \nu^* (K_Y+\Delta_Y).
$$
Set $\varphi:=\mu \circ \nu$. Then the divisor $K_Z+\Delta_Z$ is $\varphi$-nef and its restriction to any irreducible 
component of $\nu_*^{-1}(\Delta_Y)$ is nef and big.

The trace of the equation \eqref{bdivisors} on $Z$ is 
$$
\nu_*^{-1}((A_{{\mathfrak X}/X})_Y) = K_Z + \Delta_Z + (\mbox{Env}_X(-K_X))_Z.
$$
Indeed $\Delta_Z$ is the union of all the $\varphi$-exceptional divisors taken with multiplicity one, so
$(1_{{\mathfrak X}/X})_Z = \Delta_Z$. Moreover all the $\nu$-exceptional divisors have log-discrepancy $0$,
so $(A_{{\mathfrak X}/X})_Z$ is just equal to the strict transform of $(A_{{\mathfrak X}/X})_Y$.

By \cite[Lemma 2.10]{BDF12} the restriction of $(\mbox{Env}_X(-K_X))_Z$ to any $\varphi$-exceptional divisor is pseudoeffective,
so the restriction of $\nu_*^{-1}(A_{{\mathfrak X}/X})_Y$ to any irreducible 
component of $\nu_*^{-1}(\Delta_Y)$ is big. 
Since $\Delta_Y$ is not zero, this implies that $\nu_*^{-1}(A_{{\mathfrak X}/X})_Y$ is not the zero divisor.
Since we have
$$
\supp \nu_*^{-1}(A_{{\mathfrak X}/X})_Y \subset \supp \nu_*^{-1}(\Delta_Y),
$$
we see that the restriction of $\nu_*^{-1}(A_{{\mathfrak X}/X})_Y$ to any irreducible component of
its support is big. By the negativity lemma (in its big version \cite[Prop.4.1]{Gra12})
this implies that $\nu_*^{-1}(A_{{\mathfrak X}/X})_Y$ is not effective.
Thus the log-discrepancy $b$-divisor $A_{{\mathfrak X}/X}$ is not effective, a contradiction to \cite[Prop.4.19]{BDF12}.

{\em Suppose that $\mu$ is an isomorphism in codimension $1$.} The variety $Y$ has log-canonical singularities,
so all the log-discrepancies are non-negative. Since $\mu$ is an isomorphism in codimension one
we see that $A_{\mathfrak X/X}$ is effective, hence $\Vol(X, x) = 0$ by \cite[Prop.4.19]{BDF12}.
\end{proof}

\subsection{Logarithmic ramification formula}

Let $\holom{f}{X_1}{X_2}$ be a finite surjective morphism between normal varieties. 
For every Weil divisor $D \subset X_2$ we define the pull-back $f^*D$ 
as the unique Weil divisor obtained by completing $f^* D|_{X_{2, \reg}}$.
If $D$ is $\Q$-Cartier of Cartier index $m$, then $f^* D$ is $\Q$-Cartier of index $m$.
The ramification divisor is defined by
\begin{equation} \label{ramification}
R := \sum_{D \subset X_2} f^* D - \supp(f^* D),
\end{equation}
where the sum runs over all prime divisors in $X_2$. 
By generic smoothness the sum is finite, so $R$ is an effective Weil divisor.
Its image $B:=f(R)$ is the branch divisor of $f$.
By the ramification formula we have 
$$
K_{X_1} = f^* K_{X_2} + R.
$$

\begin{lemma} \label{lemmalogarithmic}
Let $\holom{f}{X_1}{X_2}$ be a finite surjective morphism between normal varieties.
Let $\Delta_2$ be a reduced effective Weil divisor, and set $\Delta_1 := \supp f^* \Delta_2$. Then we have the logarithmic ramification formula
\begin{equation} \label{logarithmic}
K_{X_1} + \Delta_1 = f^* (K_{X_2}+\Delta_2) + R_\Delta,
\end{equation}
where $R_\Delta$ is an effective divisor. Moreover $\Delta_1$ and $R_\Delta$ do not have any common component. 
We call $R_\Delta$ the logarithmic ramification divisor.
\end{lemma}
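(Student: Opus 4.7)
The plan is to rearrange the ordinary ramification formula and then carefully bookkeep the contributions one prime divisor of $X_2$ at a time.

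First I would start from the ramification formula $K_{X_1} = f^* K_{X_2} + R$ recalled in \eqref{ramification} and simply add $\Delta_1 - f^*\Delta_2$ to both sides, obtaining
\begin{equation*}
K_{X_1} + \Delta_1 = f^*(K_{X_2}+\Delta_2) + \bigl(R + \Delta_1 - f^*\Delta_2\bigr).
\end{equation*}
So the natural candidate is $R_\Delta := R + \Delta_1 - f^*\Delta_2$. All the work is in showing this divisor is effective and shares no prime component with $\Delta_1$.

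Next I would analyse the combination $R + \Delta_1 - f^*\Delta_2$ divisor by divisor. Write $\Delta_2 = \sum_j D_j$ as a sum of distinct prime divisors, and for each prime divisor $D \subset X_2$ write
\begin{equation*}
f^* D = \sum_{E \mapsto D} r_E \, E,
\end{equation*}
the sum running over the prime divisors $E \subset X_1$ with $f(E)=D$, where $r_E \geq 1$ is the ramification index (well-defined since $X_1$ and $X_2$ are normal and $f$ is finite). Since distinct prime divisors in $X_2$ have disjoint sets of prime preimages on $X_1$, we get $\Delta_1 = \sum_j \sum_{E \mapsto D_j} E$, and therefore
\begin{equation*}
\Delta_1 - f^*\Delta_2 = -\sum_j \sum_{E \mapsto D_j} (r_E - 1)\, E.
\end{equation*}
On the other hand, from the definition \eqref{ramification},
\begin{equation*}
R = \sum_{D \subset X_2}\ \sum_{E \mapsto D} (r_E - 1)\, E.
\end{equation*}

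The last step is to split the sum defining $R$ according to whether $D = D_j$ lies in $\Delta_2$ or not. The contribution from the $D_j$ is exactly cancelled by $\Delta_1 - f^*\Delta_2$, leaving
\begin{equation*}
R_\Delta \;=\; \sum_{D \not\subset \Delta_2}\ \sum_{E \mapsto D} (r_E - 1)\, E.
\end{equation*}
Since each $r_E \geq 1$ this is effective, and by construction every prime component $E$ of $R_\Delta$ satisfies $f(E) \not\subset \Delta_2$, so $E \not\subset f^{-1}(\Delta_2)$ and in particular $E$ is not a component of $\Delta_1 = \supp f^*\Delta_2$. This gives both required properties.

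There is no real obstacle; the only thing that needs attention is ensuring that the pullback and the ramification decomposition are well defined as Weil divisors on the normal variety $X_1$, and that preimages of distinct prime divisors on $X_2$ stay disjoint as sets of prime divisors on $X_1$ (which is automatic since every prime divisor of $X_1$ has a unique image prime divisor under the finite map $f$). Once this bookkeeping is set up, effectivity and the no-common-component property fall out of the single cancellation above.
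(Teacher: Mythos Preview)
Your proof is correct and follows essentially the same approach as the paper: both start from the ordinary ramification formula, set $R_\Delta = R + \Delta_1 - f^*\Delta_2$, and then verify the two properties by comparing the ramification indices $r_E - 1$ appearing in $R$ with those in $f^*\Delta_2 - \Delta_1$. Your presentation is slightly cleaner in that you derive the explicit formula $R_\Delta = \sum_{D \not\subset \Delta_2}\sum_{E \mapsto D}(r_E-1)E$, whereas the paper instead checks directly that $\mult_{W_i} R_\Delta = 0$ for each component $W_i$ of $\Delta_1$; the content is the same.
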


\begin{proof}
Adding $\Delta_1 = f^* \Delta_2 - (f^* \Delta_2-\Delta_1)$ to the ramification formula we obtain 
$$
K_{X_1} + \Delta_1 = f^* (K_{X_2}+\Delta_2) + R - (f^* \Delta_2-\Delta_1).
$$
We claim that $R_\Delta:=R - (f^* \Delta_2-\Delta_1)$ is an effective divisor such that 
$\Delta_1$ and $R_\Delta$ do not have 
any irreducible components in common. Indeed if $W \subset \Delta_2$ is an irreducible component, we have
$$
f^* W = \sum m_i W_i,
$$
with $W_i$ the irreducible components of $f^* W$ and $m_i$ the ramification index along $W_i$. In particular
if $W$ is not in the branch divisor $B$, then $f^* W= \sum W_i$ so 
$$
\mult_{W_i} f^* \Delta_2 = \mult_{W_i} \Delta_1 = 1,
$$
and obviously no $W_i$ is contained in $R$. If $W \subset B$, then by \eqref{ramification} we have
$$
\mult_{W_i} R = \mult_{W_i} f^* W - 1 = \mult_{W_i} f^* \Delta_2 - \mult_{W_i} \Delta_1.
$$
Thus we have $\mult_{W_i} R_\Delta = 0$.
\end{proof}

\begin{remark} \label{remarklogarithmic}
If $K_{X_1}+\Delta_1$ and $K_{X_2}+\Delta_2$ are $\Q$-Cartier, then $R_\Delta$ is $\Q$-Cartier.
\end{remark}

We will also use a weak generalisation of the logarithmic ramification
formula \eqref{logarithmic} to morphisms which are only generically finite.

\begin{lemma} \label{lemmagenericallyfinite}
Let $\holom{g}{V}{Y}$ be a generically finite, projective, surjective morphism 
between normal varieties. 
Let $\Delta_Y$ be a reduced effective Weil divisor on $Y$ such that $K_Y+\Delta_Y$ is $\Q$-Cartier.
Let \holom{\eta}{V}{V_{St}} and $\holom{h}{V_{St}}{Y}$ be the Stein factorisation of $g$.
Set 
$$
\Delta_V := \eta_*^{-1}(\supp h^* \Delta_Y).
$$
Then we have
$$
K_V+\Delta_V = g^* (K_{Y} + \Delta_Y) + R_g
$$
where $R_g$ is a $\Q$-Weil divisor. Moreover 
$\Delta_V$ and $R_g$ do not have any common component. 
\end{lemma}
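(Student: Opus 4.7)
The plan is to reduce to the already-proved finite case by exploiting the Stein factorisation. In $g = h \circ \eta$, the morphism $h\colon V_{St} \to Y$ is finite surjective between normal varieties (since $V_{St} = \mathrm{Spec}\, g_*\sO_V$ is normal), while $\eta\colon V \to V_{St}$ is birational, being generically finite with connected fibres. Thus Lemma \ref{lemmalogarithmic} immediately supplies the analogous formula on $V_{St}$, and the task becomes to transfer the relevant information to $V$ by analysing $\eta$ along the prime components of $\Delta_V$.

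Concretely, I would first apply Lemma \ref{lemmalogarithmic} to $h$ to obtain
$$
K_{V_{St}} + \Delta_{V_{St}} = h^*(K_Y+\Delta_Y) + R_\Delta^h,
$$
where $\Delta_{V_{St}} = \supp h^*\Delta_Y$ and $R_\Delta^h$ is an effective Weil divisor sharing no component with $\Delta_{V_{St}}$. Since $K_Y+\Delta_Y$ is $\Q$-Cartier, its pullback $g^*(K_Y+\Delta_Y) = \eta^* h^*(K_Y+\Delta_Y)$ is $\Q$-Cartier, so setting $R_g := K_V + \Delta_V - g^*(K_Y+\Delta_Y)$ automatically produces a $\Q$-Weil divisor; the substantive content is the disjointness of supports. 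For a prime divisor $D \subset V$ contained in $\Delta_V$, the fact that $\Delta_V$ is a strict transform forces $D$ to be non-$\eta$-exceptional, hence it maps birationally onto a prime divisor $D_{St} \subset \Delta_{V_{St}}$. By Zariski's main theorem applied to the birational morphism $\eta$ between normal varieties, $\eta$ is a local isomorphism in a neighbourhood of the generic point of $D$. Choosing compatible canonical divisors (those induced by a fixed $K_Y$ through the ramification formulas) and reading off the order of vanishing along $D$ of the defining equation of $R_g$, I recover
$$
\mathrm{ord}_D(R_g) = \mathrm{ord}_{D_{St}}\bigl(K_{V_{St}} + \Delta_{V_{St}} - h^*(K_Y+\Delta_Y)\bigr) = \mathrm{ord}_{D_{St}}(R_\Delta^h) = 0,
$$
the last equality by Lemma \ref{lemmalogarithmic}.

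The main obstacle is that $K_{V_{St}}$ need not be $\Q$-Cartier, so the identity above on $V_{St}$ cannot be pulled back globally by $\eta^*$ to produce a formula on $V$. The workaround is to avoid a global identity on $V$ altogether and instead verify the desired disjointness of supports pointwise at the generic points of the prime components of $\Delta_V$, which by construction lie outside the $\eta$-exceptional locus and where the local-isomorphism property furnished by Zariski's main theorem allows coefficients to be transported unchanged between $V_{St}$ and $V$.
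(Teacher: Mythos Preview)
Your proof is correct and follows essentially the same route as the paper: Stein-factorise $g = h\circ\eta$, apply Lemma~\ref{lemmalogarithmic} to the finite map $h$, then transfer the disjointness statement along the birational $\eta$. The only cosmetic difference is in this last step: the paper observes that the combination $K_{V_{St}}+\supp h^*\Delta_Y - R_{\Delta_{St}} = h^*(K_Y+\Delta_Y)$ is $\Q$-Cartier and hence can be pulled back globally via $\eta^*$, giving $R_g = E + \eta_*^{-1}(R_{\Delta_{St}})$ with $E$ $\eta$-exceptional, whereas you bypass the global pullback and compare multiplicities directly at the generic points of the components of $\Delta_V$ using that $\eta$ is a local isomorphism there.
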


\begin{proof}
The morphism $h$ is finite, so by
\eqref{logarithmic} we have
$$
K_{V_{St}} + \supp h^* \Delta_Y = h^* (K_Y+\Delta_Y) + R_{\Delta_{St}},
$$
where $R_{\Delta_{St}}$ is an effective Weil divisor 
that has no common component with $\supp h^* \Delta_Y$.
The divisor $K_{V_{St}}+\supp h^* \Delta_Y-R_{\Delta_{St}}$ is $\Q$-Cartier, so we can write
$$
K_V+ \eta_*^{-1}(\supp h^* \Delta_Y-R_{\Delta_{St}}) 
= \eta^* (K_{V_{St}}+\supp h^* \Delta_Y-R_{\Delta_{St}}) + E = f^* (K_{Y} + \Delta_Y) + E
$$
where $E$ is an $\eta$-exceptional divisor.
Set now
$$
R_g := E + \eta_*^{-1}(R_{\Delta_{St}}).
$$
Since every irreducible component of $E$ is $\eta$-exceptional and $R_{\Delta_{St}}$ 
has no common component with $\supp h^* \Delta_Y$, it is clear that
$\eta_*^{-1}(\supp h^* \Delta_Y)$ has no common component with $R_g$.
\end{proof}

\subsection{Endomorphisms and Nlc-locus}

\begin{definition} \label{definitiontotallyinvariant}
Let $X$ be a normal variety, and let \holom{f}{X}{X} be an endomorphism of degree $\deg(f)>1$.
We say that a closed subset $Z \subset X$ is totally invariant if we have a set-theoretical equality $\fibre{f}{Z}=Z$.
\end{definition}

\begin{remark} \label{remarkdominates}
Let \holom{f}{X_1}{X_2} be a finite surjective morphism between normal varieties. By \cite[Cor.14.4.]{EGAIV}
the morphism $f$ is universally open. In particular if $Z \subset X_2$ is any subvariety, the induced
morphism $X_1 \times_{X_2} Z \rightarrow Z$ is open. Hence every irreducible component
of $X_1 \times_{X_2} Z$ dominates $Z$. 
\end{remark}

\begin{lemma} \label{lemmainvariance}
Let $X$ be a normal  variety, and let $\holom{f}{X}{X}$ be an endomorphism of degree $\deg(f)>1$.
Let $\Delta$ be a reduced effective totally invariant Weil divisor such that $K_X+\Delta$ is $\Q$-Cartier.
Let $Z \subset X$ be an irreducible component of $\Nlc(X, \Delta)$. 
Then (up to replacing $f$ by some power) we have
$$
\fibre{f}{Z} = Z.
$$

If $(X, \Delta)$ has at most log-canonical singularities, let $Z$ be an lc centre. Then (up to replacing $f$ by some power) we have
$$
\fibre{f}{Z} = Z.
$$
In this case we have $Z \not\subset R_\Delta$ where $R_\Delta$ is the logarithmic ramification divisor.
\end{lemma}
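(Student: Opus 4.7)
The plan is to compare log-discrepancies on source and target of $f$ via a commutative diagram of log-resolutions. I would choose a log-resolution $\pi_2 \colon X'_2 \to X$ of the target copy of $X$ and a log-resolution $\pi_1 \colon X'_1 \to X$ of the source copy dominating the relevant fibre product, so that the induced map $f' \colon X'_1 \to X'_2$ is a generically finite morphism with $\pi_2 \circ f' = f \circ \pi_1$. Combining the classical behaviour of log-discrepancies under finite morphisms (\cite[Prop.~5.20]{KM98}, applied after Stein factorisation of $f'$ as in Lemma \ref{lemmagenericallyfinite}) with the logarithmic ramification formula of Lemma \ref{lemmalogarithmic}, for every prime divisor $E \subset X'_1$ mapping onto a prime divisor $F \subset X'_2$ with ramification index $r_E$ one gets the identity
\[
a(E, X, \Delta) \;=\; r_E \cdot a(F, X, \Delta) \;-\; \mult_E(\pi_1^{*} R_\Delta),
\]
whose correction term is nonnegative since $R_\Delta$ is effective.

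Two consequences follow immediately. If $a(F, X, \Delta) < 0$, then $a(E, X, \Delta) \le r_E\, a(F,X,\Delta) < 0$, so the centre $\pi_1(E)$ lies in $\Nlc(X,\Delta)$; since every divisor $F$ witnessing a point of $\Nlc(X,\Delta)$ can be realised on some $X'_2$ and its valuation extends along the finite map $f$ to valuations on $X'_1$ over every preimage, this gives the inclusion $f^{-1}(\Nlc(X,\Delta)) \subseteq \Nlc(X,\Delta)$. If instead $(X,\Delta)$ is lc and $a(F,X,\Delta)=0$, then the identity reduces to $a(E,X,\Delta) = -\mult_E(\pi_1^{*} R_\Delta)$; combined with the lc inequality $a(E,X,\Delta) \ge 0$, this forces $a(E,X,\Delta)=0$ \emph{and} $\mult_E(\pi_1^{*} R_\Delta)=0$. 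So $E$ again computes an lc centre, and moreover its centre is not contained in $R_\Delta$.

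To upgrade $f^{-1}(\Nlc) \subseteq \Nlc$ to total invariance of each irreducible component, I would argue by induction on dimension using the finiteness of the component set. Let $\mathcal{S}_d$ denote the components of $\Nlc(X,\Delta)$ of dimension $d$, and take $d$ maximal. For $Z \in \mathcal{S}_d$ the set $f^{-1}(Z)$ is of pure dimension $d$ and contained in $\Nlc$; since any irreducible closed subset of $\Nlc$ of the maximal dimension $d$ is itself a member of $\mathcal{S}_d$, the preimage $f^{-1}(Z)$ decomposes into elements of $\mathcal{S}_d$, each mapping onto $Z$ by Remark \ref{remarkdominates}. Hence for each $Z$ there exists $W \in \mathcal{S}_d$ with $f(W)=Z$, so the (a priori partial) assignment $W \mapsto f(W)$ surjects $\mathcal{S}_d$ onto itself, and by finiteness it is defined everywhere and is a bijection. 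Replacing $f$ by a suitable power turns this permutation into the identity, yielding both $f(Z)=Z$ and (since $Z$ is then the unique element of $\mathcal{S}_d$ mapping to $Z$) $f^{-1}(Z)=Z$. For components of smaller dimension $d'$, the induction step is: any component $W$ of $f^{-1}(Z)$ contained in an already totally invariant larger component $Z' \in \mathcal{S}_{d''}$ with $d''>d'$ would satisfy $Z = f(W) \subseteq f(Z')=Z'$, contradicting the distinctness of the components $Z$ and $Z'$. Hence the components of $f^{-1}(Z)$ all lie in $\mathcal{S}_{d'}$ and the permutation argument applies once more.

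The lc-centre case is handled by the same dimension-induction/permutation argument applied to the (finite, in each dimension) set of lc centres, using that $f^{-1}$ preserves lc centres by the second consequence above. After replacing $f$ by a power so that $f^{-1}(Z)=Z$, I pick a divisor $F$ over $X$ with centre $Z$ and $a(F,X,\Delta)=0$ and, on suitably refined log-resolutions, a prime divisor $E \subset X'_1$ with $f'(E)=F$ and centre $\pi_1(E)=Z$; such $E$ exists because every component of $f'^{-1}(F)$ has image a $\dim Z$-dimensional irreducible subset of $f^{-1}(Z)=Z$, hence equal to $Z$. The identity then gives $\mult_E(\pi_1^{*} R_\Delta)=0$, so $E$ is not a component of $\pi_1^{-1}(R_\Delta)$, i.e., $Z \not\subseteq R_\Delta$. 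The main technical obstacle is arranging the compatible log-resolutions so that every relevant divisor (those computing given lc centres or witnessing components of $\Nlc$) appears on the same model and so that \cite[Prop.~5.20]{KM98} applies cleanly while tracking $R_\Delta$ via Lemma \ref{lemmagenericallyfinite}; the combinatorial permutation-and-induction on components is then routine.
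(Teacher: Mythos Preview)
Your proposal is correct and follows essentially the same approach as the paper: both use \cite[Prop.~5.20]{KM98} combined with the logarithmic ramification formula to show that $f^{-1}$ preserves $\Nlc(X,\Delta)$ (respectively the set of lc centres), and then run the same descending-dimension induction and finite-permutation argument on irreducible components. The only cosmetic differences are that the paper works directly with the normalised fibre product $X \times_X X'$ (so the lifted map is genuinely finite) rather than compatible log-resolutions plus Stein factorisation, and the paper phrases the key identity via the intermediate boundary $\Delta - R_\Delta$ rather than writing out your consolidated formula $a(E,X,\Delta) = r_E\, a(F,X,\Delta) - \mult_E(\pi_1^{*} R_\Delta)$ in log-discrepancies.
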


\begin{proof} 
By \eqref{logarithmic} and Remark \ref{remarklogarithmic} we have
$$
K_X + \Delta = f^* (K_X+\Delta) + R_\Delta,
$$
with $R_\Delta$ an effective Weil divisor that is $\Q$-Cartier.

Let us recall a computation from \cite[Prop.5.20]{KM98}:
let $W \subset X$ be any subvariety, and
let $\holom{\mu}{X'}{X}$ be a proper birational morphism from a normal variety $X'$ such that 
$$
K_{X'}+\mu_*^{-1}(\Delta) = \mu^*(K_X+\Delta) + R + a(E, X, \Delta) E
$$
with $R$ a $\mu$-exceptional divisor and $E$ a $\mu$-exceptional prime divisor such that $\mu(E)=W$.
Let $X''$ be the normalisation of the fibre product $X \times_X X'$ and consider the following commutative
diagram
\[
\xymatrix{
X''
\ar[r]^{f'} \ar[d]_{\mu'} & X' \ar[d]^{\mu}
\\
X \ar[r]^{f} & X
}
\]
Let $W_1, \ldots, W_r$ be the irreducible components of $\fibre{f}{W}$.
By Remark \ref{remarkdominates} every $W_i$ dominates $W$ via $f$. 
Thus for every $i \in \{1, \ldots, r\}$ the
fibre product 
$$
W_i \times_W E \subset X \times_X X'
$$ 
contains an irreducible divisorial component that surjects onto $W_i$.
Let $E_i' \subset X''$ be a prime divisor that maps onto this divisor, then we have 
$f'(E_i')=E$. Denote by $r_i$
the ramification index of $f'$ along $E_i'$. By \cite[p.160, last line]{KM98} we have
$$
a(E_i', X, \Delta-R_\Delta)+1 = r \left( a(E, X, \Delta)+1 \right).
$$ 
Since $R_\Delta$ is effective and $\Q$-Cartier, we have
$$
a(E_i', X, \Delta) \leq a(E_i', X, \Delta-R_\Delta)  
$$
with equality holding if and only if $W_i \not\subset R_{\Delta}$.
Thus we see that if $a(E, X, \Delta)<1$ (resp.  $a(E, X, \Delta)\leq1$) then we have $a(E_i', X, \Delta)<1$ (resp. $a(E_i', X, \Delta)\leq1$). Moreover we have the following implication:
\begin{equation} \label{ramify}
\mbox{If} \ a(E, X, \Delta)=1 \ \mbox{and} \ a(E_i', X, \Delta)=1, \mbox{then} \
W_i \not\subset R_{\Delta}.
\end{equation}

{\em Proof of the first statement.} 
We will argue by descending induction on the dimension of the irreducible
components of $\Nlc(X, \Delta)$. The start of the induction is trivial since there is no irreducible
component of $\Nlc(X, \Delta)$ of dimension $\dim X$.
Suppose now that every irreducible component
of $\Nlc(X, \Delta)$ of dimension at least $m+1$ is totally invariant, and let 
$Z_1, \ldots, Z_k$ be the irreducible components of $\Nlc(X, \Delta)$ of dimension $m$.

Fix a $j \in \{ 1, \ldots, k\}$, and let $Z_{j}'$ be an irreducible component of $\fibre{f}{Z_j}$.
By what precedes we have $Z_j' \subset \Nlc(X, \Delta)$. We claim that $Z_j'$ is actually an irreducible
component of $\Nlc(X, \Delta)$: if this was not the case there would be an irreducible component $W$ of $\Nlc(X, \Delta)$
such that $Z_j' \subset W$ and $\dim W \geq m+1$. Yet by our induction hypothesis $W$ is totally invariant,
so $Z_j' \subset W$ implies that $Z_j \subset W$. Thus $Z_j$ is not an irreducible component of $\Nlc(X, \Delta)$,
a contradiction.

Hence every irreducible component of $\fibre{f}{Z_j}$ is an irreducible component of dimension $m$ of $\Nlc(X, \Delta)$.
Since there are only finitely many such components, namely $Z_1, \ldots, Z_k$, we see that 
$f^{-1}$ induces a bijection on
the irreducible components of dimension $m$ of $\Nlc(X, \Delta)$. Thus some power of $f$ induces the identity.

{\em Proof of the second statement.} Since $X$ is log-canonical there exist only finitely many lc centres.
We can now repeat the proof of the first statement to see that $f^{-1}$ acts by permutation on the lc centres,
so some power induces the identity. 
An lc centre $Z$ that is totally invariant and contained in $R_\Delta$ contradicts the statement \eqref{ramify}, 
so it does not exist.
\end{proof}

\begin{lemma} \label{lemmalifting}
Let $X_1$ and $X_2$ be normal varieties, and let $\holom{f}{X_1}{X_2}$ be a finite morphism.
Let $\Delta_1$ and $\Delta_2$ be reduced effective Weil divisors on $X_1$ and $X_2$ such that
$\Delta_1 = \supp f^* \Delta_2$ and we have
$$
K_{X_1}+\Delta_1 = f^* (K_{X_2}+\Delta_2).
$$
Suppose that the pair $(X_2, \Delta_2)$ has a log-canonical model 
$\holom{\mu_2}{(Y_2, \Delta_{Y,2})}{(X_2, \Delta_2)}$.

Then the pair $(X_1, \Delta_1)$ has a log-canonical model $\holom{\mu_1}{(Y_1, \Delta_{Y, 1})}{(X_1, \Delta_1)}$, moreover 
$f$ lifts to a finite morphism
$\holom{g}{Y_1}{Y_2}$ such that
$$
K_{Y_1}+\Delta_{Y, 1} = g^* (K_{Y_2}+\Delta_{Y, 2})
$$
and $\mu_2 \circ g = f \circ \mu_1$.
\end{lemma}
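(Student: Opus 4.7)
The natural construction is to take $Y_1$ to be the normalisation of the unique irreducible component of the fibre product $X_1 \times_{X_2} Y_2$ which dominates $X_1$; this component is well defined because over the open subset $U \subset X_2$ on which $\mu_2$ is an isomorphism, the fibre product is isomorphic to $f^{-1}(U)$, which is irreducible. Denote by $g \colon Y_1 \to Y_2$ and $\mu_1 \colon Y_1 \to X_1$ the induced morphisms, so that $\mu_2 \circ g = f \circ \mu_1$ by construction. The morphism $g$ is finite (it is the composition of the base change of the finite morphism $f$ with a finite normalisation) and $\mu_1$ is proper and birational. Set $\Delta_{Y,1} := \mu_{1*}^{-1}(\Delta_1) + E^{lc}_{\mu_1}$.

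The heart of the proof is the identity
$$K_{Y_1} + \Delta_{Y,1} = g^{*}(K_{Y_2} + \Delta_{Y,2}).$$
The plan is to apply the logarithmic ramification formula \eqref{logarithmic} to $g$ with reduced boundary $\Delta_{Y,2}$, obtaining
$$K_{Y_1} + \supp g^{*}\Delta_{Y,2} = g^{*}(K_{Y_2}+\Delta_{Y,2}) + R_{g,\Delta}$$
with $R_{g,\Delta}$ effective and sharing no component with $\supp g^{*}\Delta_{Y,2}$. Combined with the decomposition $K_{Y_2}+\Delta_{Y,2} = \mu_2^{*}(K_{X_2}+\Delta_2) + \Delta_{Y,2}^{>1}$ from the definition of the log-canonical model, the commutativity of the square, and the hypothesis $K_{X_1}+\Delta_1 = f^{*}(K_{X_2}+\Delta_2)$, it then suffices to verify (a) $\supp g^{*}\Delta_{Y,2} = \supp \Delta_{Y,1}$, and (b) $R_{g,\Delta}=0$.

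Fact (a) is a codimension chase: each prime divisor $D \subset Y_1$ maps via the finite morphism $g$ to a prime divisor in $Y_2$, and either $D$ is non-$\mu_1$-exceptional, in which case $D \subset \supp g^{*}\Delta_{Y,2}$ is equivalent to $f(\mu_1(D)) \subset \Delta_2$, i.e.\ to $D \subset \mu_{1*}^{-1}\Delta_1$, or $D$ is $\mu_1$-exceptional, in which case $g(D) \subset \Exc(\mu_2) \subset \supp \Delta_{Y,2}$ automatically. Fact (b) is the main obstacle. A component $D$ of $R_{g,\Delta}$ would by definition satisfy $g(D) \not\subset \supp \Delta_{Y,2}$ while $g$ ramifies along $D$; by (a), the first condition forces $D$ to be non-$\mu_1$-exceptional with $\mu_1(D) \not\subset \Delta_1$. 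Now the hypothesis $K_{X_1}+\Delta_1 = f^{*}(K_{X_2}+\Delta_2)$ says, via \eqref{logarithmic} applied to $f$, that $f$ is unramified in codimension one outside $\Delta_1$; in particular $f$ is unramified along the prime divisor $\mu_1(D)$. Since $\mu_1$ and $\mu_2$ are isomorphisms at the generic points of $D$ and $g(D)$ respectively, a comparison of ramification indices through the commutative square forces $g$ to be unramified along $D$, a contradiction.

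Once the main identity is established, the remaining verifications are routine. Log-canonicity of $(Y_1, \Delta_{Y,1})$ follows from log-canonicity of $(Y_2, \Delta_{Y,2})$ via the pull-back formula for finite morphisms in \cite[Prop.5.20]{KM98}. The divisor $K_{Y_1}+\Delta_{Y,1}$ is $\mu_1$-ample because it equals the pull-back by the finite morphism $g$ of the $\mu_2$-ample $\Q$-Cartier divisor $K_{Y_2}+\Delta_{Y,2}$, and $g$ restricts to a finite surjective morphism from each fibre of $\mu_1$ onto the corresponding fibre of $\mu_2$, so relative ampleness is preserved. Thus $(Y_1, \Delta_{Y,1}, \mu_1)$ is the log-canonical model of $(X_1, \Delta_1)$ and $g$ is the desired finite lift.
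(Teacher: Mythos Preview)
Your proof is correct and follows essentially the same approach as the paper's: construct $Y_1$ by normalising the fibre product, identify $\Delta_{Y,1}$ with $\supp g^*\Delta_{Y,2}$, apply the logarithmic ramification formula to $g$, and argue that the residual term $R_{g,\Delta}$ vanishes because any component of it would be non-$\mu_1$-exceptional yet correspond to ramification of $f$ outside $\Delta_1$, which the hypothesis forbids. The paper phrases the last step slightly differently (pushing forward to $X_1$ to see that $R_\Delta$ is $\mu_1$-exceptional, hence contained in $\Delta_{Y,1}$, hence zero), but this is the same observation as your ramification-index comparison through the commutative square; your extra care in singling out the dominating component of the fibre product is harmless and arguably more precise.
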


Our proof follows Nakayama's argument in the surface case \cite[Lemma 2.7.6]{Nak08}.

\begin{proof}
Let $Y_1$ the normalization of the fiber product $X_1 \times_{X_2} Y_2$. Then we have a commutative diagram
$$
\xymatrix{
 Y_1 \ar[d]^{p_1} \ar[r]^{p_2} & Y_2 \ar[d]^{\mu_2} \\
 X_1    \ar[r]^{f} & X_2
}
$$
where the morphisms $p_i$ are induced by the projections from the fibre product. 
Recall that by Definition \ref{definitionlcmodel} one has
$$
\Delta_{Y, 2} = (\mu_2)_*^{-1}(\Delta_2)+E^{lc}_{\mu_2},
$$
where $E^{lc}_{\mu_2}$ is the sum of all the $\mu_2$-exceptional prime divisors taken with coefficient one.
Since $f$ and $p_2$ are finite we see that 
$$
\supp(p_2^* E^{lc}_{\mu_2})
$$
is the sum of all the $p_1$-exceptional prime divisors taken with coefficient one.
We set
$$
\Delta_{Y, 1} := (p_1^{-1})_* \Delta_1 + \supp(p_2^* E^{lc}_{\mu_2})
$$
and claim that the ramification formula
$$
K_{Y_1}+\Delta_{Y, 1} = p_2^* (K_{Y_2}+\Delta_{Y, 2})
$$
holds. Assuming this for the time being, let us see how to conclude: by \cite[Prop.5.20]{KM98} the pair
$(Y_1, \Delta_{Y, 1})$ is log-canonical. Since the morphism $p_1$ is obtained by base-changing $\mu_2$ and normalising, 
the pull-back of the $\mu_2$-ample divisor $K_{Y_2}+\Delta_{Y,2}$ is $p_1$-ample. By uniqueness of the log-canonical model
(cf. Remark \ref{remarklcmodel}) we see that $(Y_1, \Delta_{Y, 1})$ is the log-canonical model of $(X_1, \Delta_1)$.
The finite morphism $g:=p_2$ gives the lifting of $f$. 

{\em Proof of the claim.} We have $\supp f^* \Delta_2 = \Delta_1$, hence by our definition of $\Delta_{Y, 1}$
$$
\supp p_2^* \Delta_{Y, 2} = \Delta_{Y, 1}.
$$
Thus by the logarithmic ramification formula \eqref{logarithmic} we have
$$
K_{Y_1}+\Delta_{Y, 1} = p_2^* (K_{Y_2}+\Delta_{Y, 2}) + R_\Delta
$$
with $R_\Delta$ an effective divisor that has no common component with $\Delta_{Y, 1}$. 
Since by hypothesis $K_{X_1}+\Delta_1 = f^* (K_{X_2}+\Delta_2)$ it is clear that $R_\Delta$ is $p_1$-exceptional.
Since $\Delta_{Y,1}$ contains every $p_1$-exceptional prime divisors with coefficient one, the divisor $R_\Delta$ is zero.
\end{proof}

\section{Proofs of the main results}

\begin{proposition} \label{propositionbranch}
Let $X$ be a normal variety, and let $\holom{f}{X}{X}$ be an endomorphism of degree $\deg(f)>1$.
Let $\Delta$ be a reduced effective  totally invariant Weil divisor such that
$K_X+\Delta$ is $\Q$-Cartier.

Let $Z$ be an irreducible component of $\Nlc(X, \Delta)$ that is totally invariant. 
Then $Z \not\subset R_\Delta$ where $R_\Delta$ is the logarithmic branch divisor.
\end{proposition}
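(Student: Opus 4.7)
I argue by contradiction. Suppose $Z\subset R_\Delta$. The plan is to pass to the log-canonical model of $(X,\Delta)$, lift $f$ to a rational self-map, and extract a contradiction from a cycle of $\mu$-exceptional divisors lying over $Z$ whose ``defect multiplicities'' strictly increase at each step.

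By Remark \ref{remarklcmodel}(b) the pair $(X,\Delta)$ admits a log-canonical model $\mu\colon Y\to X$ with $K_Y+\Delta_Y=\mu^*(K_X+\Delta)+\Delta_Y^{>1}$, where $\Delta_Y^{>1}$ is antieffective and $\supp\Delta_Y^{>1}=\Exc(\mu)$; moreover $\Nlc(X,\Delta)=\mu(\Exc(\mu))$. The set $\mathcal E_Z:=\{E\subset Y:E\text{ a }\mu\text{-exceptional prime divisor with }\mu(E)=Z\}$ is therefore finite and non-empty, and for $E\in\mathcal E_Z$ I set $b_E:=-\mult_E(\Delta_Y^{>1})>0$. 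Since $Y$ is normal, the composition $g:=\mu^{-1}\circ f\circ\mu$ is a rational self-map of $Y$ defined in codimension one; resolve it to obtain a normal variety $V$ with a birational morphism $\eta\colon V\to Y$ and a projective generically finite morphism $h\colon V\to Y$ of degree $\deg f$ satisfying $\mu\circ h=f\circ\mu\circ\eta$. Write $\nu:=\mu\circ\eta\colon V\to X$.

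The core of the argument is the following assertion: for every $E_0\in\mathcal E_Z$ there exist $E'\in\mathcal E_Z$ and an integer $r\geq 1$ with
$$b_{E'}=r\,b_{E_0}+s_{E'},\qquad s_{E'}:=\mult_{E'}(\mu^*R_\Delta).\qquad(\star)$$
For the existence of $E'$: the divisor $g^*E_0=\eta_*(h^*E_0)$ is non-zero and effective (by generic finiteness of $g$), and any of its prime components $E'$ satisfies $f(\mu(E'))=\mu(E_0)=Z$ via $\mu\circ g=f\circ\mu$; total invariance of $Z$ and finiteness of $f$ then force $\dim\mu(E')=\dim Z$ and $\mu(E')\subseteq Z$, whence $\mu(E')=Z$ and $E'\in\mathcal E_Z$. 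To derive $(\star)$, let $\tilde F:=\eta_*^{-1}(E')\subset V$, so $h(\tilde F)=E_0$ with well-defined ramification index $r\geq 1$ (via the Stein factorisation of $h$). Apply Lemma \ref{lemmagenericallyfinite} to $h$ with $\Delta_Y$:
$$K_V+\Delta_V=h^*(K_Y+\Delta_Y)+R_h,$$
with $R_h$ and $\Delta_V$ having no common component. Since $h(\tilde F)=E_0\subset\supp\Delta_Y$ one has $\tilde F\subset\Delta_V$, hence $\mult_{\tilde F}(R_h)=0$. Now expand $h^*(K_Y+\Delta_Y)=\nu^*(K_X+\Delta)-\nu^*R_\Delta+h^*\Delta_Y^{>1}$ (combining $K_Y+\Delta_Y=\mu^*(K_X+\Delta)+\Delta_Y^{>1}$ with the logarithmic ramification formula $f^*(K_X+\Delta)=K_X+\Delta-R_\Delta$ from Lemma \ref{lemmalogarithmic}), take $\mult_{\tilde F}$, and use the discrepancy identity $\mult_{\tilde F}(\nu^*(K_X+\Delta))=\mult_{\tilde F}(K_V)+1+b_{E'}$ (which follows from $\tilde F$ being $\nu$-exceptional and $a(E',X,\Delta)=-1-b_{E'}$) to eliminate $\mult_{\tilde F}(K_V)$; the formula $(\star)$ falls out.

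By hypothesis $Z\subset R_\Delta$ and $\mu(E')=Z$, so $s_{E'}>0$ and $(\star)$ gives $b_{E'}>r\,b_{E_0}\geq b_{E_0}$ strictly. Iterating the preimage correspondence from any $E^{(0)}\in\mathcal E_Z$, finiteness of $\mathcal E_Z$ forces $E^{(n)}=E^{(k)}$ for some $n>k$; accumulating the strict inequalities $b_{E^{(i+1)}}>b_{E^{(i)}}$ around the cycle $E^{(k)}\to E^{(k+1)}\to\cdots\to E^{(n)}=E^{(k)}$ yields $b_{E^{(k)}}>b_{E^{(k)}}$, the desired contradiction. The main technical hurdle I anticipate is making the rational-lift computation rigorous: because $g$ is only a rational map, there is no direct analogue of Lemma \ref{lemmainvariance}(2) to assert that $E'$ avoids the logarithmic ramification of $g$, and this missing input is supplied precisely by Lemma \ref{lemmagenericallyfinite} through the fact that $\tilde F\subset\Delta_V$ forces $\mult_{\tilde F}(R_h)=0$.
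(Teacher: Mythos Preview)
Your overall strategy—pass to the log-canonical model, lift $f$ to a rational self-map of $Y$, and compare discrepancies via Lemma~\ref{lemmagenericallyfinite}—matches the paper's, and the identity $(\star)$ is correctly derived. The genuine difference is the endgame: the paper first replaces $f$ by a high iterate so that $\mu^*R_\Delta+\sum_i a_iE_i$ becomes effective (its Step~1), then obtains a one-shot sign contradiction along a single divisor $D\subset V$ with $h(D)=E_1$; you instead try to avoid that preliminary iteration by producing a strictly increasing sequence of $b$-values inside the finite set $\mathcal E_Z$.

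The gap is precisely the step that makes your iteration possible: the assertion that $g^*E_0=\eta_*(h^*E_0)$ is non-zero. Generic finiteness of the rational map $g$ does not supply this. Via the Stein factorisation of $h$ one certainly obtains prime divisors $D\subset V$ with $h(D)=E_0$, but nothing prevents every such $D$ from being $\eta$-exceptional, in which case $\eta_*(h^*E_0)=0$ and there is no $E'\in\mathcal E_Z$ at all. Since $E_0$ need not be $\Q$-Cartier on $Y$ and the indeterminacy locus of $g$ typically meets $\Exc(\mu)$, there is no numerical or openness argument available to force a divisorial preimage down on $Y$. Your own computation in fact yields $a(D,X,\Delta)<-1$ for any such $D$, but this does not force the centre of $D$ on $Y$ to be a divisor: one has $a(D,X,\Delta)=a(D,Y,\Delta_Y)+\mult_D(\eta^*\Delta_Y^{>1})$, and the second summand can be arbitrarily negative once $\eta(D)\subset\supp\Delta_Y^{>1}$, while $a(D,Y,\Delta_Y)\geq -1$ by log-canonicity. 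The paper sidesteps this entirely: it never requires $\eta(D)$ to be a divisor in $Y$, only that $\mu(\eta(D))=Z$ (an easy consequence of total invariance), and then the effectivity estimate from Step~1 gives $\mult_D\bigl(\eta^*(\mu^*R_\Delta+\Delta_Y^{>1})\bigr)\geq 0$ against $\mult_D(h^*\Delta_Y^{>1}+R_h)<0$, a contradiction valid regardless of whether $D$ descends to $\mathcal E_Z$.
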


\begin{remark}
If $\Delta=0$ and $X$ is a surface this follows from a theorem of Wahl \cite{Wah90}, cf. also Favre \cite{Fav10}. 
More generally if $\Delta=0$ and $X$ has at most isolated singularities, we can apply \cite[Thm.B]{BDF12}
or \cite[Cor.]{Ful11}.
Our strategy is inspired by Nakayama's proof of the surface case \cite[Lemma 2.7.9]{Nak08}.
\end{remark}

\begin{proof}
Let \holom{\mu}{(Y, \Delta_Y)}{(X, \Delta)} be the log-canonical model of $(X, \Delta)$. 
By Remark \ref{remarklcmodel} we have 
\begin{equation} \label{lcmodel2}
K_{Y} + \Delta_{Y} = \mu^* (K_{X}+ \Delta) + \Delta_{Y}^{>1},
\end{equation}
where $\Delta_{Y}^{>1}$ is an antieffective divisor such that $\supp \Delta_{Y}^{>1} = \Exc(\mu)$.
Since $Z$ is an irreducible component of $\Nlc(X, \Delta)$ 
there exists at least one prime divisor $E_1$ in $Y$ that surjects onto $Z$.
Denote by $E_1, \ldots, E_k$ the irreducible components of $\supp(\Delta_Y^{>1})$ that surject onto $Z$.
Then we can write
\begin{equation} \label{minusone}
\Delta_Y^{>1} = \sum_{i=1}^k a_i E_i + E',
\end{equation}
where the $a_i$ are the log-discrepancies with respect to $(X, \Delta)$.
Since $Z$ is an irreducible component of $\Nlc(X, \Delta)$ the antieffective divisor $E'$ has the property $Z \not\subset \mu(\supp(E'))$.

We will argue by contradiction and suppose that $Z \subset R_\Delta$.

{\em Step 1. An estimate of the discrepancies.}
Let 
$$
K_X+\Delta = f^* (K_X+\Delta) + R_\Delta
$$ 
be the logarithmic ramification formula.
By Remark \ref{remarklogarithmic} the divisor $R_\Delta$ is $\Q$-Cartier and we denote by $m$
its Cartier index.
Thus the pull-back $\mu^* R_\Delta$ is well-defined and since $Z \subset R_\Delta$ we have
$$
\mult_{E_i} (\mu^* R_\Delta) \geq \frac{1}{m}
$$
for every $i=1, \ldots, k$. 
Note moreover that for all $l \in \N$ the logarithmic ramification divisor $R_{\Delta, l}$ of the $l$-th iterate $f^l$ satisfies
$$
R_{\Delta, l} = \sum_{j=0}^{l-1} (f^j)^* (R_\Delta).
$$
Since $Z \subset R_\Delta$ and $\fibre{f}{Z}=Z$ we see that $Z \subset (f^j)^* (R_\Delta)$, hence 
$$
\mult_{E_i} (\mu^* (f^j)^* R_\Delta) \geq \frac{1}{m}
$$
for all $i$ and $j$. Thus for $l$ sufficiently high we have
$\mult_{E_i} (\mu^* R_{\Delta, l}) + a_i \geq 0$.
Since our statement does not depend on the iterate of $f$ we can suppose without loss
of generality that these inequalities holds for $l=1$. Thus we have
\begin{equation} \label{estimatediscrepancies}
\mult_{E_i} (\mu^* R_{\Delta}) + a_i \geq 0 
\end{equation}
for all $i \in \{ 1, \ldots, k\}$.

{\em Step 2. Comparing the discrepancies.}
The endomorphism $f$ induces a rational map $Y \dashrightarrow Y$, we choose a resolution of the indeterminacies of
$\holom{\nu}{V}{Y}$ such that $V$ is smooth. Then we obtain a generically finite, projective, surjective morphism $\holom{g}{V}{Y}$ 
such that we have a commutative diagram
$$
\xymatrix{
V   \ar[rd]_{g} \ar[r]^{\nu} & Y  \ar[r]^{\mu} & X  \ar[d]^{f}
\\
& Y \ar[r]^{\mu}  & X
}
$$
Using the notation of Lemma \ref{lemmagenericallyfinite} we have
\begin{equation} \label{ramify1}
K_V+\Delta_V = g^* (K_{Y} + \Delta_Y) + R_g.
\end{equation} 
Note that by the definition of $\Delta_V$ we have $\Delta_Y = g(\Delta_V)$.

The pair $(Y, \Delta_Y)$ is log-canonical, so we can write
$$
K_V = \nu^* (K_{Y}+\Delta_Y) + N'
$$
where $N'$ is a divisor such that all coefficients are at least $-1$.
Thus if we set $N := N'+\Delta_V$, then
\begin{equation} \label{ramify2}
K_V + \Delta_V = \nu^* (K_Y+\Delta_Y) + N
\end{equation}
and for every irreducible component $D \subset \Delta_V$ we have
\begin{equation} \label{zerobis}
\mult_D N \geq 0.
\end{equation}
By \eqref{ramify1} and \eqref{ramify2} we have 
$$
\nu^* (K_Y+\Delta_Y) + N = g^* (K_{Y} + \Delta_Y) + R_g.
$$
Plugging in \eqref{lcmodel2} on both sides we get
$$
\nu^* (\mu^* (K_X+\Delta) + \Delta_Y^{>1}) + N = g^* (\mu^* (K_X+\Delta) + \Delta_Y^{>1}) + R_g
$$
By the logarithmic ramification formula $K_X+\Delta=f^* (K_X+\Delta)+R_\Delta$ we can simplify to
\begin{equation} \label{one}
\nu^* (\mu^* R_\Delta + \Delta_Y^{>1}) + N = g^* \Delta_Y^{>1} + R_g
\end{equation} 
Since $g(\Delta_V)=\Delta_Y$ and $\supp \Delta_Y^{>1} \subset \Delta_Y$ (cf. Remark \ref{remarklcmodel}) there exists 
a prime divisor $D \subset \Delta_V$ such that $g(D)=E_1$. Let us first observe that
\begin{equation} \label{three}
\mu(\nu(D))=Z.
\end{equation}
Indeed by our commutative diagram
$$
f(\mu(\nu(D)))  = \mu(g(D))=\mu(E_1)=Z,
$$  
hence $\mu(\nu(D))$ is contained in $\fibre{f}{Z}$ which by hypothesis is $Z$.
Since $Z$ is irreducible and $\mu(\nu(D))$ has dimension at least $\dim Z$ (it surjects via $f$ on $Z$), we get the equality \eqref{three}.

By  Lemma \ref{lemmagenericallyfinite} we know that $\Delta_V$ and $R_g$ do not have common components,
so $\mult_D R_g=0$. Since $\Delta_Y^{>1}$ is antieffective and its support contains $E_1$, we obtain
\begin{equation} \label{two}
\mult_D (g^* \Delta_Y^{>1} + R_g)<0.
\end{equation}
Consider now the decomposition
$\Delta_Y^{>1} = \sum_{i=1}^k a_i E_i + E'$ introduced in \eqref{minusone}.
We have $Z \not\subset \mu(\supp(E'))$ and $\mu(\nu(D))=Z$ by \eqref{three}, 
so we see that $\nu(D) \not\subset \supp(E')$.
Since $\mu^* R_\Delta+\Delta_{Y}^{>1}$ is $\Q$-Cartier this implies that $\mu^* R_\Delta + \sum a_i E_i$ is $\Q$-Cartier in the generic point
of $\nu(D)$. By the inequalities \eqref{estimatediscrepancies} we know that 
$$
\mu^* R_\Delta + \sum a_i E_i  
$$
is an effective divisor,
so we obtain
$$
\mult_D \nu^* (\mu^* R_\Delta + \Delta_{Y}^{>1}) = \mult_{\nu(D)_{\gen}} (\mu^* R_\Delta + \sum a_i E_i)  \geq 0.
$$
Yet by \eqref{zerobis} this implies that
$$
\mult_D  (\nu^* (\mu^* R_\Delta + \Delta_{Y}^{>1}) + N) \geq 0,
$$
so by \eqref{one} we have a contradiction to \eqref{two}.
\end{proof}

\begin{proof}[Proof of Theorem \ref{theoremmainlocalpair}]
By Lemma \ref{lemmainvariance} we can suppose (up to replacing $f$ by some iterate)
that all the irreducible components of $\Nlc(X, \Delta)$ are totally invariant. 
Let $Z$ be such an irreducible component, then by Proposition \ref{propositionbranch}
we have $Z \not\subset R_\Delta$, where $R_\Delta$ is the logarithmic branch divisor.
We will now argue by contradiction and suppose that there exists an irreducible component $Z \subset \Nlc(X, \Delta)$
such that the induced endomorphism \holom{f|_Z}{Z}{Z} satisfies 
\begin{equation} \label{smaller}
\deg(f|_Z)<\deg(f).
\end{equation}

Let $(\tilde X, Z_{\gen})$ be the germ of the normal variety $X$ in the generic point $Z_{\gen} \subset X$,
and denote by
$$
\holom{\tilde f}{(\tilde X, Z_{\gen})}{(\tilde X, Z_{\gen})}
$$
the induced endomorphism. Set $\tilde \Delta:=\Delta|_{\tilde X}$, then the finite morphism $\tilde f$ 
\'etale in codimension one, i.e. we have
\begin{equation} \label{logetale1}
K_{\tilde X}+\tilde \Delta = (\tilde f)^* (K_{\tilde X}+\tilde \Delta).
\end{equation}
Let $\holom{\tilde \mu}{(\tilde Y, \Delta_{\tilde Y})}{(\tilde X, \tilde \Delta)}$
be the log-canonical model. By Lemma \ref{lemmalifting} the finite morphism $\tilde f$
lifts to a finite morphism $\holom{g}{\tilde Y}{\tilde Y}$ such that
\begin{equation} \label{logetale2}
K_{\tilde Y}+\Delta_{\tilde Y} = g^* (K_{\tilde Y}+\Delta_{\tilde Y})
\end{equation}
and $\mu \circ g = \tilde f \circ \mu$.

Since $Z_{\gen}$ is an irreducible component of $\Nlc(\tilde X, \tilde \Delta)$ and
the $\mu$-exceptional locus has pure codimension one (cf. Remark \ref{remarklcmodel}),
there exists at least one prime divisor $E_1$ in $\tilde Y$ that surjects onto $Z_{\gen}$.
Let $E_1, \ldots, E_k$ be the prime divisors in \fibre{\mu}{Z_{\gen}} that surject onto $Z_{\gen}$,
then $g^{-1}$ acts by permutation on the set of divisors $\{ E_1, \ldots, E_k \}$.
Thus (up to replacing $\tilde f$ and hence $g$ by some iterate) we can assume that $g^{-1}$ acts as the identity.
Let now 
$$
\holom{g|_{E_1}}{E_1}{E_1}
$$
be the induced endomorphism. We claim that we have 
$$
\deg(g|_{E_1}) = \deg (f|_Z).
$$
Assuming this for the time being, let us see how to conclude:
since $\deg(f)=\deg(g)$ our claim and \eqref{smaller} implies that $\deg(g|_{E_1})<\deg(g)$. Thus $E_1$
is contained in the branch divisor of $g$ and we have
\begin{equation} \label{Doneramified}
g^* E_1 = r E_1
\end{equation}
with $r>1$. By Remark \ref{remarklcmodel} we have 
$$
K_{\tilde Y} + \Delta_{\tilde Y} = \mu^* (K_{\tilde X}+\tilde \Delta) + \Delta_{\tilde Y}^{>1},
$$
where $\Delta_{\tilde Y}^{>1}$ is an antieffective divisor such that $\supp \Delta_{\tilde Y}^{>1} = \Exc(\mu)$. Plugging this into
\eqref{logetale2} we obtain
$$
\mu^* (K_{\tilde X}+\tilde \Delta) + \Delta_{\tilde Y}^{>1} = g^* \mu^* (K_{\tilde X}+\tilde \Delta) + g^* \Delta_{\tilde Y}^{>1}.
$$
Yet by \eqref{logetale1} this simplifies to
$$
\Delta_{\tilde Y}^{>1} = g^* \Delta_{\tilde Y}^{>1}.
$$
Since $\supp \Delta_{\tilde Y}^{>1} = \Exc(\mu)$ it contains the divisor $E_1$. Thus by restricting the equation above
to $E_1$ we obtain $g^* E_1=E_1$, a contradiction to \eqref{Doneramified}.

{\em Proof of the claim.} We have a commutative diagram
$$
\xymatrix{
 E_1 \ar[d]_{\mu|_{E_1}} \ar[r]^{g|_{E_1}} & E_1 \ar[d]^{\mu|_{E_1}} \\
 Z _{\gen}   \ar[r]^{f|_{Z_{\gen}}} & Z_{\gen}
}
$$
Let $F_1$ be a general fibre of $\mu|_{E_1}$ and set
$F_2:=g|_{E_1}(F_1)$. Then $F_2$ is a general  $\mu|_{E_1}$-fibre, in particular
$F_1$ and $F_2$ are homologous. 
Set \holom{\tilde g}{F_1}{F_2}.
By \eqref{logetale2} we have
$$
(K_{\tilde Y}+\Delta_{\tilde Y})^{\dim F_1} \cdot F_1 = (g^* (K_{\tilde Y}+\Delta_{\tilde Y}))^{\dim F_1} \cdot F_1
= \deg(\tilde g) (K_{\tilde Y}+\Delta_{\tilde Y})^{\dim F_1} \cdot F_2.
$$
Since $F_1$ and $F_2$ are homologous we have
$$
(K_{\tilde Y}+\Delta_{\tilde Y})^{\dim F_1} \cdot F_2
=
(K_{\tilde Y}+\Delta_{\tilde Y})^{\dim F_1} \cdot F_1.
$$
Moreover $K_{\tilde Y}+\Delta_{\tilde Y}$ is ample on $F_1$, so these intersection numbers are not zero. Thus we obtain that
$$
\deg \tilde g = 1.
$$
By the commutative diagram above this implies the claim.
\end{proof}

\begin{corollary} \label{corollarypolarisedpair}
Let $X$ be a normal projective variety, and let $\holom{f}{X}{X}$ be a polarised endomorphism of degree $\deg(f)>1$.
Let $\Delta$ be a reduced effective  totally invariant Weil divisor such that $K_X+\Delta$ is $\Q$-Cartier.

Then the pair $(X, \Delta)$ is log-canonical. Moreover if $Z$ is an lc centre of $(X, \Delta)$, then (up to replacing $f$ by some iterate) $Z$ is totally invariant. In this case we have 
$Z \not\subset R_\Delta$ where $R_\Delta$ is the logarithmic ramification divisor.
\end{corollary}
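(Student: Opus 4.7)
The plan is to combine Theorem \ref{theoremmainlocalpair} with the polarisation hypothesis $f^*H \sim mH$ (where $m > 1$, since $\deg(f) > 1$) to rule out a non-empty non-lc locus, and then invoke the second part of Lemma \ref{lemmainvariance} for the statements concerning lc centres.

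Suppose for contradiction that $\Nlc(X, \Delta)$ is non-empty and let $Z$ be one of its irreducible components. By Theorem \ref{theoremmainlocalpair}, after replacing $f$ by a suitable iterate $f^k$ (which remains polarised with respect to $H$, with new exponent $m^k > 1$), we may assume that $Z$ is totally invariant, so that $f$ restricts to a finite surjective morphism $f|_Z \colon Z \to Z$ with $\deg(f|_Z) = \deg(f)$. The standard top self-intersection computation on $X$ gives $\deg(f) \cdot H^{\dim X} = (f^*H)^{\dim X} = m^{\dim X} H^{\dim X}$, hence $\deg(f) = m^{\dim X}$. On the other hand, since $Z$ is totally invariant, one has $(f|_Z)^*(H|_Z) \sim m (H|_Z)$, and the ampleness of $H|_Z$ guarantees $(H|_Z)^{\dim Z} > 0$; the analogous top self-intersection on $Z$ therefore yields $\deg(f|_Z) = m^{\dim Z}$. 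Combining these with $\deg(f|_Z) = \deg(f)$ and $m > 1$ forces $\dim Z = \dim X$, contradicting $Z \subseteq \Nlc(X, \Delta) \subsetneq X$.

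Hence $\Nlc(X, \Delta) = \emptyset$, that is, $(X, \Delta)$ is log-canonical. The remaining assertions—that every lc centre is totally invariant up to replacing $f$ by an iterate, and is not contained in $R_\Delta$—then follow directly from the second part of Lemma \ref{lemmainvariance}, which is already established in the log-canonical setting.

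The argument is essentially a dimension count, so no serious obstacle is anticipated; the only points to verify are that the polarisation condition is preserved under iteration and restricts sensibly to a totally invariant subvariety (both immediate from $f^*H \sim mH$ and the existence of the pullback $(f|_Z)^*$ on Cartier classes), together with the positivity $(H|_Z)^{\dim Z} > 0$, which is standard for ample classes.
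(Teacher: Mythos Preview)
Your proof is correct and follows essentially the same route as the paper: both use Theorem \ref{theoremmainlocalpair} to obtain $\deg(f|_Z)=\deg(f)$ for a totally invariant irreducible component $Z$ of $\Nlc(X,\Delta)$, compare this with the polarisation computation $\deg(f|_Z)=m^{\dim Z}<m^{\dim X}=\deg(f)$ to derive a contradiction, and then invoke the second part of Lemma \ref{lemmainvariance} for the lc centre statements. Your write-up is somewhat more explicit about the intersection-number computations and the preservation of polarisation under iteration and restriction, but the argument is the same.
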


Note that the case $\Delta=0$ of this statement corresponds to Corollary \ref{corollarypolarised}.

\begin{proof}
The endomorphism $f$ is polarised, so there exists an ample divisor $H$ such that $f^* H \simeq m H$
with $m>1$. Thus if $Z \subset X$ is a totally invariant subvariety, the endomorphism $\holom{f|_Z}{Z}{Z}$ is polarised by $H|_Z$.
In particular we have
$$
\deg(f|_Z) = m^{\dim Z} < m^{\dim X} = \deg(f).
$$
By Theorem \ref{theoremmainlocalpair} this implies that $\Nlc(X, \Delta)$ is empty.
The second part of the statement follows from Lemma \ref{lemmainvariance}.
\end{proof}

For inductive purposes the following non-normal version should be useful.

\begin{corollary} \label{corollarypolarisedpairnonnormal}
Let $X$ be a projective variety that is $S_2$ and whose codimension one points are either regular points or ordinary nodes\footnote{$X$ is demi-normal in the sense of Koll\'ar.}. Let $\holom{f}{X}{X}$ be a polarised endomorphism of degree $\deg(f)>1$. Let $\Delta$ be a reduced effective totally invariant Weil divisor such that $K_X+\Delta$ is $\Q$-Cartier and no irreducible component of $\Delta$ is contained in the non-normal locus. 

Then the pair $(X, \Delta)$ is semi-log-canonical. 
\end{corollary}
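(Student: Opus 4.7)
The plan is to reduce to the normal polarised case (Corollary \ref{corollarypolarisedpair}) via normalisation. I would start by letting $\nu\colon X^\nu \to X$ be the normalisation, so that $X^\nu$ is a finite disjoint union of normal projective varieties, and by the universal property of normalisation $f$ lifts to a finite surjective endomorphism $\tilde f\colon X^\nu \to X^\nu$ with $\nu \circ \tilde f = f \circ \nu$. After replacing $f$ by a suitable iterate I may assume $\tilde f$ preserves each irreducible component $X^\nu_i$, and the restriction $\tilde f_i$ is polarised by $\nu^*H|_{X^\nu_i}$, which is ample because $\nu$ is finite.

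Next, let $D$ denote the conductor divisor on $X^\nu$, so that $K_{X^\nu}+D=\nu^*K_X$, and set $\tilde\Delta := D + \nu^{-1}_*\Delta$; the hypothesis that no component of $\Delta$ lies in the non-normal locus ensures that $\tilde\Delta$ is reduced and effective and satisfies $K_{X^\nu}+\tilde\Delta = \nu^*(K_X+\Delta)$, which is $\Q$-Cartier. Pulling the logarithmic ramification formula for $f$ back through $\nu$ yields
\[
K_{X^\nu}+\tilde\Delta \;=\; \tilde f^*\bigl(K_{X^\nu}+\tilde\Delta\bigr) + \nu^*R_\Delta,
\]
with $\nu^*R_\Delta$ effective and $\Q$-Cartier. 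By the definition of semi-log-canonicity it suffices to show that $(X^\nu,\tilde\Delta)$ is log-canonical.

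Now $\nu^{-1}_*\Delta$ is totally invariant under $\tilde f$, but the conductor $D$ is in general only forward-invariant: the two branches at a node of $X$ may be identified by $\tilde f$ to a smooth point, so Corollary \ref{corollarypolarisedpair} does not apply verbatim to $(X^\nu,\tilde\Delta)$. The key point is that the proofs of Lemma \ref{lemmainvariance}, Proposition \ref{propositionbranch} and Theorem \ref{theoremmainlocalpair} use total invariance of the boundary only in order to produce an effective $\Q$-Cartier error term in the logarithmic ramification formula; here $\nu^*R_\Delta$ plays that role, and the discrepancy identity of \cite[Prop.5.20]{KM98} combined with the effectivity of $\nu^*R_\Delta$ still propagates the $\Nlc$-property to $\tilde f$-preimages. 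I would therefore rerun those arguments componentwise on each pair $(X^\nu_i, \tilde\Delta|_{X^\nu_i})$, whose log-canonical model exists by \cite[Thm.1.1]{OX12}, to conclude that any irreducible component $Z$ of $\Nlc(X^\nu_i,\tilde\Delta|_{X^\nu_i})$ is totally invariant for some iterate of $\tilde f_i$ and satisfies $\deg(\tilde f_i|_Z)=\deg\tilde f_i$.

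For polarised $\tilde f_i$ the equality $\deg(\tilde f_i|_Z)=\deg\tilde f_i$ forces $m^{\dim Z}=m^{\dim X^\nu_i}$ and hence $Z=X^\nu_i$, which is absurd. Thus $\Nlc(X^\nu,\tilde\Delta)=\emptyset$ and $(X,\Delta)$ is semi-log-canonical. The hard part will be the technical verification that total invariance of the boundary can indeed be replaced by the weaker effective pullback identity above throughout the proofs of Lemma \ref{lemmainvariance}, Proposition \ref{propositionbranch} and Theorem \ref{theoremmainlocalpair}, which amounts to a careful audit of where total invariance is actually invoked in each argument.
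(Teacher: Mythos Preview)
Your setup via normalisation, lifting $f$ to $\tilde f$, and using the conductor formula $K_{\tilde X}+\tilde\Delta+D=\nu^*(K_X+\Delta)$ is exactly how the paper begins. The divergence comes where you assert that the conductor $D$ ``is in general only forward-invariant'' and then propose to re-audit the proofs of Lemma~\ref{lemmainvariance}, Proposition~\ref{propositionbranch} and Theorem~\ref{theoremmainlocalpair} with a weakened hypothesis. This detour is unnecessary: the conductor $D$ \emph{is} totally invariant under $\tilde f$. The paper cites Proposition~5.4 of the arXiv version of \cite{NZ10} for this fact. Once you have it, $\tilde\Delta+D$ is a reduced effective totally invariant Weil divisor on the normal variety $\tilde X$ with $K_{\tilde X}+\tilde\Delta+D$ $\Q$-Cartier, and Corollary~\ref{corollarypolarisedpair} applies verbatim to give that $(\tilde X,\tilde\Delta+D)$ is log-canonical, hence $(X,\Delta)$ is slc. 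The paper's proof is three lines after the normalisation step.

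Two further remarks. First, the scenario you describe (``two branches at a node identified by $\tilde f$ to a smooth point'') would in fact make $D$ fail to be \emph{forward}-invariant, contrary to what you claim; it corresponds to $f$ sending a non-normal point of $X$ to a normal point. Second, the workaround you sketch is left entirely unexecuted (``the hard part will be the technical verification\ldots''), and it is not cost-free: for instance, Lemma~\ref{lemmalogarithmic} guarantees that the logarithmic ramification divisor shares no component with the boundary, a property used in the proof of Proposition~\ref{propositionbranch}, and you would have to check that your surrogate $\nu^*R_\Delta$ still has this property relative to $\tilde\Delta$ when $\tilde\Delta$ is not assumed totally invariant. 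The missing citation to \cite{NZ10} makes all of this moot.
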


\begin{proof} Let \holom{\nu}{\tilde X}{X} be the normalisation.
Let $D \subset X$ be the divisor defined by the conductor of the normalisation,
and let $\tilde \Delta$ be the divisorial part of $\fibre{\nu}{\Delta}$. Then we have
$$
K_{\tilde X} + \tilde \Delta + D = \nu^* (K_X+\Delta),
$$
so $K_{\tilde X} + \tilde \Delta + D$ is $\Q$-Cartier. Note that $D$ is reduced since $X$ has ordinary nodes 
in codimension one.

By the universal property of the normalisation, the endomorphism $f$ lifts to an endomorphism
$\holom{\tilde f}{\tilde X}{\tilde X}$. Moreover the divisor $D$ is totally invariant (cf. Prop.5.4. in 
the arXiv version of \cite{NZ10}).
By Corollary \ref{corollarypolarisedpair} the pair $(\tilde X, \tilde \Delta+D)$ is log-canonical.
Thus $(X, \Delta)$ is semi-log-canonical. 
\end{proof}

\def\cprime{$'$}


\begin{thebibliography}{BdFF12}

\bibitem[AKP08]{AKP08}
Marian Aprodu, Stefan Kebekus, and Thomas Peternell.
\newblock Galois coverings and endomorphisms of projective varieties.
\newblock {\em Math. Z.}, 260(2):431--449, 2008.

\bibitem[Ame03]{Ame03}
Ekaterina Amerik.
\newblock On endomorphisms of projective bundles.
\newblock {\em Manuscripta Math.}, 111(1):17--28, 2003.

\bibitem[BdFF12]{BDF12}
Sebastien Boucksom, Tommaso de~Fernex, and Charles Favre.
\newblock The volume of an isolated singularity.
\newblock {\em Duke Mathematical Journal}, 161:1455--1520, 2012.

\bibitem[Bea01]{Bea01}
Arnaud Beauville.
\newblock Endomorphisms of hypersurfaces and other manifolds.
\newblock {\em Internat. Math. Res. Notices}, (1):53--58, 2001.

\bibitem[Fav10]{Fav10}
Charles Favre.
\newblock Holomorphic self-maps of singular rational surfaces.
\newblock {\em Publ. Mat.}, 54(2):389--432, 2010.

\bibitem[FN07]{FN07}
Yoshio Fujimoto and Noboru Nakayama.
\newblock Endomorphisms of smooth projective 3-folds with nonnegative {K}odaira
  dimension. {II}.
\newblock {\em J. Math. Kyoto Univ.}, 47(1):79--114, 2007.

\bibitem[Fuj02]{Fuj02}
Yoshio Fujimoto.
\newblock Endomorphisms of smooth projective 3-folds with non-negative
  {K}odaira dimension.
\newblock {\em Publ. Res. Inst. Math. Sci.}, 38(1):33--92, 2002.

\bibitem[Fuj11]{Fuj11}
Osamu Fujino.
\newblock Fundamental theorems for the log minimal model program.
\newblock {\em Publ. Res. Inst. Math. Sci.}, 47(3):727--789, 2011.

\bibitem[Ful11]{Ful11}
Mihai Fulger.
\newblock Local volumes on normal algebraic varieties.
\newblock {\em arXiv preprint}, 1105.2981, 2011.

\bibitem[Gra12]{Gra12}
Patrick Graf.
\newblock Bogomolov-{S}ommese vanishing on log canonical pairs.
\newblock {\em arXiv preprint}, 1210.0421, 2012.

\bibitem[Gro66]{EGAIV}
A.~Grothendieck.
\newblock \'{E}l\'ements de g\'eom\'etrie alg\'ebrique. {IV}. \'{E}tude locale
  des sch\'emas et des morphismes de sch\'emas. {III}.
\newblock {\em Inst. Hautes \'Etudes Sci. Publ. Math.}, (28):255, 1966.

\bibitem[Har77]{Har77}
Robin Hartshorne.
\newblock {\em Algebraic geometry}.
\newblock Springer-Verlag, New York, 1977.
\newblock Graduate Texts in Mathematics, No. 52.

\bibitem[HK10]{HK10}
Christopher~D. Hacon and S{\'a}ndor~J. Kov{\'a}cs.
\newblock {\em Classification of higher dimensional algebraic varieties},
  volume~41 of {\em Oberwolfach Seminars}.
\newblock Birkh\"auser Verlag, Basel, 2010.

\bibitem[KM98]{KM98}
J{\'a}nos Koll{\'a}r and Shigefumi Mori.
\newblock {\em Birational geometry of algebraic varieties}, volume 134 of {\em
  Cambridge Tracts in Mathematics}.
\newblock Cambridge University Press, Cambridge, 1998.
\newblock With the collaboration of C. H. Clemens and A. Corti.

\bibitem[Nak08]{Nak08}
Noboru Nakayama.
\newblock On complex normal projective surfaces admitting non-isomorphic
  surjective endomorphisms.
\newblock {\em preprint}, 2008.

\bibitem[NZ10]{NZ10}
Noboru Nakayama and De-Qi Zhang.
\newblock Polarized endomorphisms of complex normal varieties.
\newblock {\em Math. Ann.}, 346(4):991--1018, 2010.

\bibitem[OX12]{OX12}
Yuji Odaka and Chenyang Xu.
\newblock Log-canonical models of singular pairs and its applications.
\newblock {\em Math. Res. Lett}, 19(2):325--334, 2012.

\bibitem[Wah90]{Wah90}
Jonathan Wahl.
\newblock A characteristic number for links of surface singularities.
\newblock {\em J. Amer. Math. Soc.}, 3(3):625--637, 1990.

\bibitem[Zha10]{Zha10}
De-Qi Zhang.
\newblock Polarized endomorphisms of uniruled varieties.
\newblock {\em Compos. Math.}, 146(1):145--168, 2010.
\newblock With an appendix by Y. Fujimoto and N. Nakayama.

\end{thebibliography}
\end{document}